\theoremstyle{plain}
\newtheorem{thm}{Theorem}[section]
\newtheorem{cor}[thm]{Corollary} 
\newtheorem{lemma}[thm]{Lemma} 
\newtheorem{prop}[thm]{Proposition}
\newtheorem{proposition}[thm]{Proposition}
\theoremstyle{remark}
\newtheorem{remark}[thm]{Remark}
\theoremstyle{definition}
\def\today{{\number\day\space
 \ifcase\month\or
  January\or February\or March\or April\or May\or June\or
  July\or August\or September\or October\or November\or December\fi
 \space\number\year}}
\newcommand\Ac{{\mathcal{A}}}
\newcommand\Afr{{\mathfrak A}}
\newcommand\alphah{{\hat\alpha}}
\newcommand\alphat{{\tilde\alpha}}
\newcommand\Bc{{\mathcal{B}}}
\newcommand\Bct{{\widetilde\Bc}}
\newcommand\Cc{{\mathcal{C}}}
\newcommand\conv{\operatorname{conv}}
\newcommand\Cpx{{\mathbf C}}
\newcommand\Dc{{\mathcal{D}}}
\newcommand\Et{{\widetilde E}}
\newcommand\Fbb{{\mathbb F}}
\newcommand\Fc{{\mathcal{F}}}
\newcommand\Ft{{\widetilde F}}
\newcommand\HEu{{\EuScript H}}                   
\newcommand\Ints{{\mathbf Z}}
\newcommand\Mcal{{\mathcal{M}}}
\newcommand\Mcalt{{\widetilde\Mcal}}
\newcommand\Nats{{\mathbf N}}
\newcommand\Nc{{\mathcal{N}}}
\newcommand\NTS{\operatorname{NTS}}
\newcommand\oup{^{\mathrm o}}
\newcommand\psih{{\hat\psi}}
\newcommand\QSS{{\operatorname{QSS}}}
\newcommand\restrict{{\upharpoonright}}
\newcommand\Tc{{\mathcal{T}}}
\newcommand\TQSS{{\operatorname{TQSS}}}
\newcommand\Uc{{\mathcal{U}}}
\newcommand\xh{{\hat x}}
\newcommand\ZQSS{{\operatorname{ZQSS}}}
\newcommand\ZTQSS{{\operatorname{ZTQSS}}}
\begin{document}

\title[Tracial quantum symmetric states]{The simplex of tracial quantum symmetric states}

\author[Dabrowski]{Yoann Dabrowski$^{*}$}
\address{Y.\ Dabrowski,
Universit\'{e} de Lyon,
Universit\'{e} Lyon 1,
Institut Camille Jordan UMR 5208,
43 blvd. du 11 novembre 1918,
F-69622 Villeurbanne cedex,
France}
\email{dabrowski@math.univ-lyon1.fr}
\thanks{\footnotesize $^{*}$Supported in part by ANR grant NEUMANN.
$^\dag$Supported in part by NSF grant DMS-1202660.
$^\ddag$Supported in part by CPDA grant of IITM}

\author[Dykema]{Kenneth J.\ Dykema$^\dag$}
\address{K.\ Dykema, Department of Mathematics, Texas A\&M University,
College Station, TX 77843-3368, USA}
\email{kdykema@math.tamu.edu}

\author[Mukherjee]{Kunal Mukherjee$^\ddag$}
\address{K.\ Mukherjee, Department of Mathematics, Indian Institute of Technology Mad\-ras, 
Chennai  -- 600 036, India}
\email{kunal@iitm.ac.in}

\subjclass[2000]{46L54}
\keywords{quantum symmetric states, amalgamted free product}

\date{27 October, 2014}

\begin{abstract}
We show that the space of tracial quantum symmetric states of an arbitrary unital C$^*$-algebra
is a Choquet simplex and is a face of the tracial state space of the universal unital C$^*$-algebra free product
of $A$ with itself infinitely many times.
We also show that the extreme points of this simplex are dense, making it the Poulsen simplex
when $A$ is separable and nontrivial.
In the course of the proof
we characterize the centers of certain tracial amalgamated free product C$^*$-algebras.
\end{abstract}

\maketitle

\section{Introduction and description of results}

Quantum exchangeable random variables (namely, random variables whose distributions are invariant for the
natural co-actions of S.\ Wang's quantum permtuation groups~\cite{W98})
were characterized by K\"ostler and Speicher~\cite{KS09} to be those sequences of identically distributed random variables
that are free with respect to the conditional expectation onto their tail algebra (that is, free with amalgamation
over the tail algebra).

In~\cite{DKW}, Dykema, K\"ostler and Williams considered, for any unital C$^*$-algebra $A$,
the analogous notion of quantum symmetric states
on the universal unital free product C$^*$-algebra $\Afr=*_1^\infty A$.
The symbols $\QSS(A)$ denote the compact convex set of all quantum symmetric states on $\Afr$.
The paper~\cite{DKW} contains a convenient characterization of the extreme points of $\QSS(A)$.
Also the compact convex set $\TQSS(A)\subseteq\QSS(A)$
of all tracial quantum symmetric states on $\Afr$ was considered, and
the extreme points of $\TQSS(A)$ were described.
Question~8.8 of~\cite{DKW} asks whether $\TQSS(A)$ is a Choquet simplex (when $A$ has a tracial state, for otherwise
$\TQSS(A)$ is empty).

The main result of this note is that $\TQSS(A)$ is a Choquet simplex whose extreme points are dense.
Thus, when $A$ is separable and nontrivial,
$\TQSS(A)$ is the Poulsen simplex~\cite{LOS78}, which is the unique metrizable simplex whose extreme points
are dense.
In showing this, we also see that $\TQSS(A)$ is
a face of the simplex $TS(\Afr)$
of all tracial states on $\Afr$
and we obtain a better description of the extreme points of $\TQSS(A)$.

Along the way, we prove some technical results that we neeed and that may be useful in other contexts.
In Section~\ref{sec:amalgtr}, we provide a proof (not readily found in the literature) of a well known fact
that natural conditions are sufficient for an amalgamted free product to have a trace.
In Section~\ref{sec:center}, we characterize the centers of certain tracial von Neumann algebra free products
with amalgamation
and we use this to characterize the set of conditional-expectation-preserving traces of von Neumann algebras.
Section~\ref{sec:tail} is short and consists of a technical result about conditional expectations.
Finally, in Section~\ref{sec:Choq}, we prove the main result.

\smallskip
{\bf Acknowledgements.}  K.\ Mukherjee gratefully acknowledges the hospitality and support of the Mathematics
Department at Texas A\&M Univeristy during the Workshop in Analysis and Probability in summer 2013
(funded by a grant from the NSF);
Y.\ Dabrowski and K.\ Dykema are grateful to the Fields Institute for its support during
the Focus Program on Noncommutative Distributions in Free Probability Theory
and to the Mathematisches Forschungsinstitut Oberwolfach for its support during a workshop on $C^*$-algebras;
much of this research was conducted at these three meetings.

\section{Amalgamated free products and tracial amalgamated free products}
\label{sec:amalgtr}

Let $\Dc$ be a von Neumann algebra,
let $I$ be a nonempty set and for every $i\in I$ let $\Bc_i$ be a von Neumann algebra
containing $\Dc$ by a unital inclusion of von Neumann algebras, and suppose $E_i:\Bc_i\to\Dc$ is a normal conditional
expectation with faithful GNS representation.
Let
\[
(\Mcal,F)=(*_\Dc)_{i\in I}(\Bc_i,E_i)
\]
be the von Neumann algebra amalgamated free product.
In the case that the $E_i$ are all faithful, details of this construction were given by Ueda~\cite{U99}, and he showed
that then $F$ is faithful (see p.\ 364 of~\cite{U99}).
Alternatively, and also when the conditional expectations $E_i$ fail to be faithful but do have faithful GNS constructions,
the free product construction may be performed by (a) taking the C$^*$-algebra free product $(\Mcal_0,F_0)$
of the $(\Bc_i,E_i)$
acting on the free product Hilbert C*-module $V$, (b) taking any normal, faithful $*$-representation $\pi$ of $\Dc$ on 
a Hilbert space $\HEu_\pi$, (c) letting $\Mcal$ be
the strong-operator-topology closure of the image of the resulting representation of $\Mcal_0$
on the Hilbert space $V\otimes_\pi\HEu_\pi$ and (d) letting $F:\Mcal\to\Dc$ be compression
by the projection from $V\otimes_\pi\HEu_\pi$ onto the Hilbert subspace $\Dc\otimes_\pi\HEu_\pi$.
The fact that $\Mcal$ is independent of the representation $\pi$ follows from that fact that any two normal faithful
representations of $\Dc$ are related by dilation and compression by a projection in the commutant.

The following result is well known, but since we rely on it, this seems like a good place to give a brief proof.
\begin{proposition}\label{prop:amalgtr}
Suppose $\tau$ is a normal trace on $\Dc$ such that for all $i\in I$, $\tau\circ E_i$ is a trace on $\Bc_i$.
Then $\tau\circ F$ is a trace on $\Mcal$ and is faithful if and only if $\tau$ is faithful.
Furthermore, every normal tracial state on $\Mcal$ that is preserved by $F$ arises in this fashion.
\end{proposition}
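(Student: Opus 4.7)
The strategy is to verify the trace identity $(\tau\circ F)(xy)=(\tau\circ F)(yx)$ on a strong-operator dense $*$-subalgebra of $\Mcal$ and extend by normality of $\tau\circ F$ (which holds since $F$ is normal). Let $\Ac\subseteq\Mcal$ denote the unital $*$-algebra generated by $\bigcup_{i\in I}\Bc_i$; it is strong-operator dense in $\Mcal$ by construction. A standard fact of amalgamated free probability is that $\Ac$ is linearly spanned by $\Dc$ together with alternating centered words $b_1\cdots b_n$, namely with $n\geq 1$, $b_k\in\Bc_{i_k}\cap\ker E_{i_k}$, and $i_1\neq i_2\neq\cdots\neq i_n$; the expectation $F$ vanishes on every such word.

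If $x,y\in\Dc$ the identity follows from the traciality of $\tau$. If one is in $\Dc$ and the other is an alternating centered word, $\Dc$-linearity of each $E_i$ shows that both $xy$ and $yx$ remain alternating centered of positive length, so both sides vanish. For the main case $x=b_1\cdots b_n$, $y=c_1\cdots c_m$ with $n,m\geq 1$ and index sequences $i_1,\ldots,i_n$ and $k_1,\ldots,k_m$, I proceed by induction on $n+m$. If $i_n\neq k_1$ and $k_m\neq i_1$, both $xy$ and $yx$ are alternating centered of positive length and both expectations vanish. Otherwise, say $i_n=k_1$: decompose $b_nc_1=d+w$ with $d=E_{i_n}(b_nc_1)\in\Dc$ and $w\in\Bc_{i_n}\cap\ker E_{i_n}$; the $w$-contribution to $F(xy)$ vanishes as an alternating centered word of length $n+m-1$, while $F(b_1\cdots b_{n-1}\,d\,c_2\cdots c_m)$ reduces, after absorbing $d$ into an adjacent centered factor (iteratively if new index coincidences are exposed), to a strictly shorter alternating product to which the inductive hypothesis applies. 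The symmetric situation $k_m=i_1$ is treated analogously. The base case is $n=m=1$ with $i_1=k_1$, where
$(\tau\circ F)(b_1c_1)=(\tau\circ E_{i_1})(b_1c_1)=(\tau\circ E_{i_1})(c_1b_1)=(\tau\circ F)(c_1b_1)$
directly from the assumed traciality of $\tau\circ E_{i_1}$; this is the only place where the tracial hypothesis on the $\tau\circ E_i$ is invoked.

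For the faithfulness assertion: if $\tau$ is faithful and $(\tau\circ F)(x^*x)=0$, then positivity of $F(x^*x)$ combined with $\tau(F(x^*x))=0$ forces $F(x^*x)=0$, whence $x=0$ by the faithfulness of $F$ on $\Mcal$ built into the construction from the faithful GNS data. The converse is immediate from $(\tau\circ F)\restrict\Dc=\tau$. For the final assertion, given any normal tracial state $\sigma$ on $\Mcal$ with $\sigma\circ F=\sigma$, put $\tau=\sigma\restrict\Dc$; it is normal and tracial, and on each $\Bc_i\subseteq\Mcal$ one has $\sigma\restrict\Bc_i=(\sigma\circ F)\restrict\Bc_i=\tau\circ E_i$, which is therefore a trace. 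Moreover $\sigma=\sigma\circ F=\tau\circ F$, exhibiting $\sigma$ in the stated form.

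The principal technical obstacle is the induction step: when the $\Dc$-valued element $d$ is absorbed between two adjacent centered factors, new index matches can appear and trigger further reduction rounds before the inductive hypothesis actually applies. This bookkeeping is standard in amalgamated free probability but must be carried out carefully, and it is the technical heart of the argument.
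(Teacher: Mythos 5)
Your overall architecture (reduce to $\Dc$ and alternating centered words, split $b_nc_1=E_{i_n}(b_nc_1)+w$ at an interface coincidence, treat faithfulness and the last assertion separately) is the same as the paper's, but the inductive step as you state it does not close, and this is exactly the technical heart that the paper has to work for. Your inductive hypothesis is only the trace identity $\tau\circ F(x'y')=\tau\circ F(y'x')$ for pairs of words of smaller total length. After contracting $xy$ at the interface $b_nc_1$ you obtain $\tau\circ F(b_1\cdots b_{n-1}\,d\,c_2\cdots c_m)$ with $d=E_{i_n}(b_nc_1)$, whereas $\tau\circ F(yx)$ contracts (if at all) at the \emph{other} interface $c_mb_1$ and yields a different expression; knowing that each reduced expression is tracially symmetric in itself says nothing about the two reductions being equal to each other, nor does it give the needed vanishing when, say, $i_n=k_1$ but $m\ne n$ or the index strings fail to pair off (in which case the other side may already be zero). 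The paper closes this by strengthening the induction: the common value is zero unless $m=n$ and the indices match, and in that case it equals the explicit nested expression \eqref{eq:taubcform} \emph{together with} its reversed, palindromic form; proving that symmetry invokes the traciality of $\tau\circ E_{i(1)}$ at every inductive step, not only at the base case $m=n=1$. (One could instead close your induction by using the inductive trace identity to rotate the reduced word and contract at the opposite interface, funneling both $\tau\circ F(xy)$ and $\tau\circ F(yx)$ to a common expression, but that matching argument is precisely the bookkeeping you defer to the final paragraph, and your claim that traciality of the $\tau\circ E_i$ enters only at the base case is a symptom that it has not been carried out.)

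A secondary point: in the faithfulness direction you deduce $x=0$ from $F(x^*x)=0$ by asserting that $F$ is faithful ``by construction.'' Under the standing hypotheses only the GNS representations of the $E_i$ are assumed faithful, not the $E_i$ themselves, so faithfulness of $F$ is not free; it does follow when each $E_i$ is faithful (Ueda's theorem), and when $\tau$ is faithful one can first argue that each $\tau\circ E_i$ is a trace with faithful GNS representation, hence faithful, hence each $E_i$ is faithful, but none of this is in your write-up. The paper's route is shorter and self-contained: if $\tau$ is faithful then the GNS representation of $\tau\circ F$ is faithful, and a normal tracial state with faithful GNS representation is itself faithful. Your converse direction and your treatment of the final assertion agree with the paper and are fine.
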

\begin{proof}
Since every tracial state $\tau$ on $\Mcal$ that is preserved by $F$ must equal $\tau\restrict_\Dc\circ F$, the last assertion
of the proposition is clearly true.
Moreover, suppose we know that $\tau\circ F$ is a trace;
if we assume also that $\tau$ is faithful, then the GNS representation of $\tau\circ F$
will be faithful;
since it is a trace, if follows that $\tau\circ F$ is itself faithful.
Thus, we need only show that $\tau\circ F$ is a trace.

Let $\Bc_i\oup=\Bc_i\cap\ker E_i$.
Let $m,n\in\Nats$ and let $b_j\in\Bc_{i(j)}\oup$ for $1\le j\le m$ and $c_j\in\Bc_{k(j)}\oup$ for all $1\le j\le n$,
with $i(j)\ne i(j+1)$ and $k(j)\ne k(j+1)$.
If $d\in\Dc$, then by freeness, we have
\begin{equation}\label{eq:Fd}
F(d(c_1c_2\cdots c_n))=0=F((c_1\cdots c_n)d),
\end{equation}
so the composition with $\tau$ is also zero.
We will show by induction on $\min(m,n)$ that
\begin{equation}\label{eq:tauFtr}
\tau\circ F\big((b_m\cdots b_2b_1)(c_1c_2\cdots c_n)\big)
=\tau\circ F\big((c_1c_2\cdots c_n)(b_m\cdots b_2b_1)\big)
\end{equation}
and, furthermore, that the above quantity is zero unless $m=n$ and $i(j)=k(j)$ for all $j$,
in which case it equals
\begin{multline}\label{eq:taubcform}
\tau\circ E_{i(m)}(b_m\,E_{i(m-1)}(b_{m-1}\cdots E_{i(2)}(b_2\,E_{i(1)}(b_1c_1)\,c_2)\cdots c_{m-1})\,c_m) \\
=\tau\circ E_{i(1)}(c_1\,E_{i(2)}(c_2\cdots E_{i(m-1)}(c_{m-1} E_{i(m)}(c_mb_m)\,b_{m-1})\cdots b_2)\,b_1).
\end{multline}
This will suffice to prove the lemma, because the span of $\Dc$ and such elements $b_1\cdots b_m$ is dense
in $\Mcal$.

By freeness, we have
\begin{equation}\label{eq:Fdelta}
F\big((b_m\cdots b_2b_1)(c_1c_2\cdots c_n)\big)
=\delta_{i(1),k(1)}\,F\big((b_m\cdots b_2)E_{i(1)}(b_1c_1)(c_2\cdots c_n)\big).
\end{equation}
If $m=n=1$, then~\eqref{eq:tauFtr} and~\eqref{eq:taubcform} follow from traciality of $\tau\circ E_{i(1)}:\Bc_{i(1)}\to\Cpx$.
If $\min(m,n)=1$ and $\max(m,n)>1$, then the right-hand-side of~\eqref{eq:Fdelta} is zero by~\eqref{eq:Fd},
and by symmetry also $F((c_1c_2\cdots c_n)(b_m\cdots b_2b_1))=0$,
as required.

We may, thus suppose $\min(m,n)>1$ and $i(1)=k(1)$.
Then, using the induction hypothesis (and noting that $\Dc c_2\subseteq\Bc_{k(2)}\oup$),
we have
\begin{align*}
\tau&\circ F\big((b_m\cdots b_2b_1)(c_1c_2\cdots c_n)\big) \\
&=\delta_{i(1),k(1)}\,\tau\circ F\big((b_m\cdots b_2)E_{i(1)}(b_1c_1)(c_2\cdots c_n)\big) \\
&=\delta_{i(1),k(1)}\delta_{m,n}\delta_{i(2),k(2)}\cdots\delta_{i(m),k(m)}  \\
&\quad \cdot \tau\circ E_{i(m)}(b_m\,E_{i(m-1)}(b_{m-1}\cdots E_{i(2)}(b_2E_{i(1)}(b_1c_1)\,c_2)\cdots c_{m-1})\,c_m).
\end{align*}
If $m\ne n$ or if $m=n$ but $i(j)\ne k(j)$ for some $j$, then not only is the above quantity zero but, by symmetry, also
$\tau\circ F((c_1c_2\cdots c_n)(b_m\cdots b_2b_1))$ vanishes.

We may, thus, suppose $m=n>1$ and $i(j)=k(j)$ for all $j$.
Treating $E_{i(1)}(b_1c_1)c_2$ as an element of $\Bc_{k(2)}\oup$,
by the induction hypothesis of~\eqref{eq:taubcform}, we get
\begin{align*}
\tau&\circ E_{i(m)}(b_m\,E_{i(m-1)}(b_{m-1}\cdots E_{i(2)}(b_2E_{i(1)}(b_1c_1)\,c_2)\cdots c_{m-1})\,c_m) \\
&=\tau\circ E_{i(2)}(E_{i(1)}(b_1c_1)c_2\,E_{i(3)}(c_3\cdots E_{i(m)}(c_mb_m)\cdots b_3)b_2) \\
&=\tau\big(E_{i(1)}(b_1c_1)\,E_{i(2)}(c_2\,E_{i(3)}(c_3\cdots E_{i(m)}(c_mb_m)\cdots b_3)b_2)\big) \\
&=\tau\circ E_{i(1)}\big(b_1c_1\,E_{i(2)}(c_2\,E_{i(3)}(c_3\cdots E_{i(m)}(c_mb_m)\cdots b_3)b_2)\big) \\
&=\tau\circ E_{i(1)}\big(c_1\,E_{i(2)}(c_2\,E_{i(3)}(c_3\cdots E_{i(m)}(c_mb_m)\cdots b_3)b_2)b_1\big),
\end{align*}
where in the last equality we have used the traciality of $\tau\circ E_{i(1)}$.
Thus, we have proved the identity~\eqref{eq:taubcform} and that this quantity equals
\[
\tau\circ F((b_m\cdots b_2b_1)(c_1c_2\cdots c_n)).
\]
By symmetry, it is equal also to 
$\tau\circ F((c_1c_2\cdots c_n)(b_m\cdots b_2b_1))$.
\end{proof}

Of course, the result analogous to Proposition~\ref{prop:amalgtr}
for amalgamated free products of C$^*$-algebras, is true by the same proof.

\section{Centers of certain amalgamated free products}
\label{sec:center}

Let $\Dc\subseteq\Bc$ be a unital inclusion of
von Neumann algebras with a normal
conditional expectation $E:\Bc\to\Dc$ whose GNS representation is faithful.
Suppose there is a normal, faithful, tracial state $\tau_\Dc$ on $\Dc$ such that $\tau_\Bc:=\tau_\Dc\circ E$ is a trace on $\Bc$.
The GNS representation of $\tau_\Bc$ is an action of $\Bc$
on the Hilbert space $L^2(\Bc,\tau_\Bc)=L^2(\Bc,E)\otimes_\Dc L^2(\Dc,\tau)$
by multiplication on the left and, thus, the GNS representation of $\tau_\Bc$ is faithful.
Since $\tau_\Bc$ is a trace, it follows that $\tau_\Bc$ itself is faithful and, hence, $E$ must be faithful.

For an element $x$ of a von Neumann algebra, we will let $[x]$ denote the range projection of $x$.
Thus, $[x]$ is the orthogonal projection onto the closure of the range of $x$, considered as a Hilbert space operator,
and it belongs to the von Neumann algebra generated by $x$.
The notation $Z(\Ac)$ means the center of $\Ac$.

\begin{lemma}\label{lem:suppProj}
With $E:\Bc\to\Dc$ and trace $\tau_\Bc$ as above, let
\[
q=q(E)=\bigvee\;\{[E(b^*b)]\mid b\in\ker E\}.
\]
Then $q\in\Dc\cap Z(\Bc)$, and $(1-q)\Bc=(1-q)\Dc$.
\end{lemma}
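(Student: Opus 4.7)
I would establish in sequence: (a) $q\in\Dc$; (b) $(1-q)b=0=b(1-q)$ for every $b\in\ker E$; (c) $q\in Z(\Dc)$. From (b) alone one gets $(1-q)\Bc=(1-q)\Dc$, and (b) together with (c) delivers $q\in Z(\Bc)$.

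Step (a) is automatic: each $[E(b^*b)]$ is the range projection of a positive element of $\Dc$, so lies in $\Dc$, and a supremum of projections in $\Dc$ stays in $\Dc$. For step (b), note that $\ker E$ is $*$-closed, so both $[E(b^*b)]$ and $[E(bb^*)]$ are dominated by $q$ for each $b\in\ker E$. A direct computation using $\tau_\Bc=\tau_\Dc\circ E$, the trace property of $\tau_\Bc$, and the $\Dc$-bimodularity of $E$ yields
\[
\tau_\Bc\bigl(((1-q)b)^*(1-q)b\bigr)=\tau_\Dc\bigl((1-q)E(bb^*)\bigr)=0,
\]
and symmetrically $\tau_\Bc((b(1-q))^*(b(1-q)))=\tau_\Dc((1-q)E(b^*b))=0$. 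Faithfulness of $\tau_\Bc$ forces $(1-q)b=0=b(1-q)$. Decomposing $x\in\Bc$ as $E(x)+(x-E(x))$ with $x-E(x)\in\ker E$ then gives $(1-q)x=(1-q)E(x)\in(1-q)\Dc$, so $(1-q)\Bc=(1-q)\Dc$.

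For step (c), exploit that $\ker E$ is stable under right multiplication by $\Dc$: for any unitary $d\in\Dc$ and $b\in\ker E$, one has $bd^*\in\ker E$ and $E((bd^*)^*(bd^*))=dE(b^*b)d^*$, whose range projection is $d[E(b^*b)]d^*\le q$. Taking the supremum over $b$ gives $dqd^*\le q$; replacing $d$ by $d^*$ yields the reverse inequality, so $dq=qd$. Since $\Dc$ is the linear span of its unitaries, $q\in Z(\Dc)$. For any $x\in\Bc$ one then has $[q,x]=[q,E(x)]+[q,x-E(x)]=0$ by (c) and (b), so $q\in Z(\Bc)$.

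The chief technical step is the promotion of $q$ from an element of $\Dc$ to an element of $Z(\Dc)$; it rests on the observation that unitary conjugation by elements of $\Dc$ permutes the family $\{E(b^*b):b\in\ker E\}$ via the substitution $b\mapsto bd^*$, and preserves range projections. Everything else is a formal consequence of the definition of $q$ together with faithfulness of $\tau_\Bc$ and bimodularity of $E$.
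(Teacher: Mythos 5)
Your proof is correct, and after the first step it takes a different route from the paper's. The opening move --- showing $q\in Z(\Dc)$ by conjugating the family $\{[E(b^*b)]:b\in\ker E\}$ by unitaries of $\Dc$ --- is the same in both arguments. Where you diverge is in how the remaining two claims are obtained: the paper proves $q\in Z(\Bc)$ by contradiction, producing a partial isometry $v$ with $0\ne v^*v\le 1-q$ and $vv^*\le q$, noting $E(v)=qE(v)(1-q)=0$ and then contradicting the definition of $q$ via faithfulness of $E$; it then handles $(1-q)\Bc=(1-q)\Dc$ by a second, separate contradiction argument. You instead prove the single stronger statement that $qb=b=bq$ for every $b\in\ker E$, directly from the trace identities $\tau_\Bc\bigl(((1-q)b)^*(1-q)b\bigr)=\tau_\Dc\bigl((1-q)E(bb^*)\bigr)=0$ and its mirror (using that $\ker E$ is $*$-closed so $[E(bb^*)]\le q$ as well), and then both conclusions of the lemma fall out formally from the decomposition $x=E(x)+(x-E(x))$. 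What your approach buys is a direct, contradiction-free argument with no appeal to partial isometries or comparison of projections, plus the slightly sharper intermediate fact $\ker E\subseteq q\Bc q$; what it costs is that it invokes traciality of $\tau_\Bc$ at the key step, whereas the paper's two contradiction arguments use only faithfulness of $E$ together with $q\in Z(\Dc)$ (the trace enters the paper's proof only through the preliminary observation that $\tau_\Bc$, hence $E$, is faithful). Since the standing hypotheses of the lemma include the trace, this difference is harmless here, and all the steps you use (faithfulness of $\tau_\Bc$, $\Dc$-bimodularity of $E$, stability of $\ker E$ under right multiplication by $\Dc$, and preservation of suprema of projections under unitary conjugation) are available and correctly applied.
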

\begin{proof}
If $b\in\ker E$ and $u$ is a unitary in $\Dc$ then $bu\in\ker E$, and
\[
[E((bu)^*(bu)]=[u^*E(b^*b)u]=u^*[E(b^*b)]u
\]
and we get $u^*qu=q$.
Thus, $q\in Z(\Dc)$.

If $q\not\in Z(\Bc)$, then there would be a partial isometry $v\in\Bc$ so that $0\ne v^*v\le 1-q$ and $vv^*\le q$.
Since $q\in Z(\Dc)$ we get $E(v)=qE(v)(1-q)=0$.
But, since $E$ is faithful, $E(v^*v)\ne0$ and $[E(v^*v)]\le 1-q$, contrary to the definition of $q$.
Thus, we must have $q\in Z(\Bc)$.

If $(1-q)\Bc\ne(1-q)\Dc$, then there would be $b\in(1-q)\Bc\cap\ker E$ with $b\ne0$.
But again, this yields $0\ne E(b^*b)=(1-q)E(b^*b)$, contrary to the choice of $q$.
Thus, we must have
$(1-q)\Bc=(1-q)\Dc$.
\end{proof}

Let
\begin{equation}\label{eq:MF1}
(\Mcal,F)=(*_\Dc)_1^\infty(\Bc,E)
\end{equation}
be the von Neumann algebra free product with amalgamation over $\Dc$ of infinitely many copies of $(\Bc,E)$.
Our main goal in this section is to show that the center of $\Mcal$ is contained in $\Dc$.

Let $\tau=\tau_\Dc\circ F$.
By Proposition~\ref{prop:amalgtr}, $\tau$ is a faithful trace on $\Mcal$.

Let $(\Bc_i,E_i)$ denote the $i$-th copy of $(\Bc,E)$ in the construction of $\Mcal$.
We now describe some standard notation for $\Mcal$ and related objects.
The von Neumann algebra $\Mcal$ is constructed on the Hilbert space $L^2(\Mcal,\tau)$,
and we write $\Mcal\ni x\mapsto\xh\in L^2(\Mcal,\tau)$ for the usual mapping with dense range.
For convenience, we will write the inner product on $L^2(\Mcal,\tau)$ to be linear in the second variable and conjugate linear in the first
variable.
Thus, we have, for $x_1,x_2\in\Mcal$,
\[
\langle \xh_1,\xh_2\rangle=\tau(x_1^*x_2).
\]
Then we have
$L^2(\Mcal,\tau)=L^2(\Mcal,F)\otimes_\Dc L^2(\Dc,\tau_\Dc)$,
and this is isomorphic to
\[
L^2(\Dc,\tau_\Dc)\oplus\bigoplus_{\substack{k\ge1 \\ i_1,\ldots,i_k\ge1 \\ i_j\ne i_{j+1}}}
\HEu_{i_1}\oup\otimes_\Dc\cdots\otimes_\Dc\HEu_{i_k}\oup\otimes_\Dc L^2(\Dc,\tau_\Dc),
\]
where $\HEu_i\oup$ is the Hilbert $\Dc,\Dc$-bimodule $L^2(\Bc_i,E_i)\ominus\Dc$.
We will denote by $\lambda$ the left action of $\Mcal$ on $L^2(\Mcal,\tau)$ and by $\rho$ the anti-multiplicative
right action,
$\rho(x)=J\lambda(x^*)J$, where $J$ is the standard conjugate linear isometry of $L^2(\Mcal,\tau)$
defined by $\xh\mapsto(x^*)\hat{\;}$.

\begin{lemma}\label{lem:ip}
Let $N\in\Nats$, let
\[
\eta_1,\,\eta_2\in
L^2(\Dc,\tau_\Dc)\oplus\bigoplus_{\substack{k\ge1 \\ 1\le i_1,\ldots,i_k\le N \\ i_j\ne i_{j+1}}}
\HEu_{i_1}\oup\otimes_\Dc\cdots\otimes_\Dc\HEu_{i_k}\oup\otimes_\Dc L^2(\Dc,\tau_\Dc)
\]
and let $b_1,b_2\in\Bc_{N+1}$.
Let $c_1,c_2,d_1,d_2\in\Dc$ be such that 
\[
c_1^*c_2=E_{N+1}(b_1^*b_2),\qquad d_2d_1^*=E_{N+1}(b_2b_1^*).
\]
Then
\begin{align*}
\langle\lambda(b_1)\eta_1,\lambda(b_2)\eta_2\rangle
&=\langle\lambda(c_1)\eta_1,\lambda(c_2)\eta_2\rangle, \\
\langle\rho(b_1)\eta_1,\rho(b_2)\eta_2\rangle
&=\langle\rho(d_1)\eta_1,\rho(d_2)\eta_2\rangle.
\end{align*}
\end{lemma}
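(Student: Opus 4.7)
The plan is to invoke the trace $\tau=\tau_\Dc\circ F$ provided by Proposition~\ref{prop:amalgtr} and translate each claimed identity into a trace computation via $\langle\hat x,\hat y\rangle=\tau(x^*y)$ on $L^2(\Mcal,\tau)$. Let $\Nc_N\subseteq\Mcal$ be the weak closure of the $*$-subalgebra generated by $\Dc$ and $\Bc_1,\ldots,\Bc_N$; by construction $(\Nc_N,F\restrict_{\Nc_N})$ is the amalgamated free product $(*_\Dc)_{i=1}^N(\Bc_i,E_i)$, and $\{\hat\xi:\xi\in\Nc_N\}$ is dense in the subspace of $L^2(\Mcal,\tau)$ displayed in the statement. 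Both sides of each identity are bounded bilinear forms in $(\eta_1,\eta_2)$ (the operators $\lambda(b_j),\rho(b_j)$ being bounded), so by density it suffices to treat the case $\eta_j=\hat{\xi_j}$ with $\xi_j\in\Nc_N$.

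The key technical input is a \emph{locality} identity: for $z\in\Bc_{N+1}$ and $x_1,x_2\in\Nc_N$,
\[
F(x_1 z x_2)=F\bigl(x_1 E_{N+1}(z) x_2\bigr),\qquad\text{so}\qquad\tau(x_1 z x_2)=\tau\bigl(x_1 E_{N+1}(z) x_2\bigr).
\]
This is proved by writing $z=E_{N+1}(z)+z\oup$ with $z\oup\in\ker E_{N+1}$, decomposing each $x_j=F(x_j)+x_j\oup$ with $x_j\oup\in\Nc_N\cap\ker F$, and observing that every term in the expansion of $F(x_1 z\oup x_2)$ is an alternating product of the sort annihilated by $F$ in the amalgamated free product $(\Nc_N)*_\Dc(\Bc_{N+1})\subseteq\Mcal$, essentially the same vanishing used in the proof of Proposition~\ref{prop:amalgtr}.

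Granted this, the first identity is immediate by applying locality with $z=b_1^*b_2$:
\[
\langle\lambda(b_1)\eta_1,\lambda(b_2)\eta_2\rangle=\tau(\xi_1^*b_1^*b_2\xi_2)=\tau(\xi_1^*c_1^*c_2\xi_2)=\langle\lambda(c_1)\eta_1,\lambda(c_2)\eta_2\rangle.
\]
For the second, first use traciality of $\tau$ on $\Mcal$ to rewrite $\langle\rho(b_1)\eta_1,\rho(b_2)\eta_2\rangle=\tau(b_1^*\xi_1^*\xi_2 b_2)=\tau(b_2b_1^*\xi_1^*\xi_2)$; then apply the (one-sided form of the) locality identity with $z=b_2b_1^*$ to replace $b_2b_1^*$ by $d_2d_1^*$; and finally cycle by traciality to reach $\tau(d_1^*\xi_1^*\xi_2 d_2)=\langle\rho(d_1)\eta_1,\rho(d_2)\eta_2\rangle$.

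The only real work is proving the locality identity, but it is a routine alternating-product calculation in the amalgamated free product, directly analogous to what appears in the proof of Proposition~\ref{prop:amalgtr}; everything else is the definition of the GNS inner product and traciality.
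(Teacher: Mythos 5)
Your proof is correct and follows essentially the same route as the paper: reduce by density to $\eta_j=\hat\xi_j$ with $\xi_j\in W^*(\bigcup_{i=1}^N\Bc_i)$, replace $b_1^*b_2$ (resp.\ $b_2b_1^*$, after cycling with the trace) by its image under $E_{N+1}$ using freeness over $\Dc$, and read off the identities from $\langle\hat x,\hat y\rangle=\tau(x^*y)$. The only difference is that you spell out the ``locality'' identity that the paper simply invokes as ``by freeness''; that elaboration is fine and changes nothing essential.
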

\begin{proof}
We may without loss of generality assume $\eta_j=\xh_j$ for some $x_j\in W^*(\bigcup_{j=1}^N\Bc_j)$.
Then
\[
\langle\lambda(b_1)\eta_1,\lambda(b_2)\eta_2\rangle=\tau(x_1^*b_1^*b_2x_2)
=\tau_\Dc(F(x_1^*b_1^*b_2x_2)).
\]
By freeness, we have
\[
F(x_1^*b_1^*b_2x_2)=F(x_1^*F(b_1^*b_2)x_2)=F(x_1^*c_1^*c_2x_2),
\]
from which we get
\[
\langle\lambda(b_1)\eta_1,\lambda(b_2)\eta_2\rangle=\tau(x_1^*c_1^*c_2x_2)
=\langle\lambda(c_1)\eta_1,\lambda(c_2)\eta_2\rangle.
\]
Similarly, we have
\begin{multline*}
\langle\rho(b_1)\eta_1,\rho(b_2)\eta_2\rangle=\tau(b_1^*x_1^*x_2b_2)=\tau(x_2b_2b_1^*x_1^*) \\
=\tau(x_2d_2d_1^*x_1^*)
=\langle\rho(d_1)\eta_1,\rho(d_2)\eta_2\rangle.
\end{multline*}
\end{proof}

\begin{thm}\label{thm:ZM}
The center of $\Mcal$ lies in $\Dc$.
In particular,
\begin{equation}\label{eq:ZM}
Z(\Mcal)=\Dc\cap Z(\Bc).
\end{equation}
\end{thm}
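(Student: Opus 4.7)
My plan is to prove the harder inclusion $Z(\Mcal)\subseteq\Dc$; equation~\eqref{eq:ZM} then follows, because any element of $Z(\Mcal)\cap\Dc$ commutes with each $\Bc_i$ and so, via the identification $\Bc_i\cong\Bc$, lies in $\Dc\cap Z(\Bc)$, while conversely every $d\in\Dc\cap Z(\Bc)$ is central in each $\Bc_i$ and hence commutes with all of $\Mcal$.

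So take $z\in Z(\Mcal)$, set $w=z-F(z)$, and aim to prove $w=0$. Using the direct-sum decomposition of $L^2(\Mcal,\tau)$ displayed just before Lemma~\ref{lem:ip}, expand
\[
\hat{w}=\sum_{\alpha}\eta_{\alpha},\qquad \eta_\alpha\in\HEu_{i_1}\oup\otimes_\Dc\cdots\otimes_\Dc\HEu_{i_k}\oup\otimes_\Dc L^2(\Dc,\tau_\Dc),
\]
as an $\ell^2$-sum over reduced words $\alpha=(i_1,\ldots,i_k)$ of length $k\ge 1$. Fix an arbitrary such $\alpha_0$; it will suffice to show $\eta_{\alpha_0}=0$.

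The key idea is to exploit centrality via a \emph{fresh} index $N+1$ not appearing in $\alpha_0$, with $N$ chosen so large that the tail $\sum_{\beta}\|\eta_\beta\|^2$ over reduced words $\beta$ using some index $>N$ is below $\eps^2$. Centrality forces $\lambda(b)\hat{z}=\rho(b)\hat{z}$ for every $b\in\Bc_{N+1}\cap\ker E_{N+1}$, so the components of both sides in the $(N+1,\alpha_0)$-piece of the decomposition must agree. A case analysis on reduced words shows that on the left the only contributions to this piece are $\lambda(b)\eta_{\alpha_0}$ (which lands there purely) together with a term of norm at most $\|b\|\eps$ arising from $\eta_{(N+1,\alpha_0)}$ via partial absorption of $b$ into the first tensor factor, while on the right the only contribution is a term of norm at most $\|b\|\eps$ arising from $\eta_{(N+1,\alpha_0,N+1)}$ via partial absorption into the last tensor factor; the $\widehat{F(z)}$-contributions vanish since $bF(z),F(z)b\in\Bc_{N+1}$ lie in $L^2(\Bc_{N+1})$. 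Since $\eps>0$ is arbitrary, this yields $\lambda(b)\eta_{\alpha_0}=0$ for every $b\in\Bc_{N+1}\cap\ker E_{N+1}$, hence, by the isomorphism $\Bc_{N+1}\cong\Bc$, for every $b\in\ker E$.

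Lemma~\ref{lem:ip} then permits replacing $\lambda(b)$ by $\lambda(c)$ with $c\in\Dc$ satisfying $c^*c=E(b^*b)$, giving $c\cdot\eta_{\alpha_0}=0$. The $w^*$-closed left ideal $\{c\in\Dc:c\cdot\eta_{\alpha_0}=0\}$ has the form $\Dc p$ for some projection $p\in\Dc$, and the containment $[E(b^*b)]\le p$ for every $b\in\ker E$ forces $p\ge q$ by the definition of $q$ in Lemma~\ref{lem:suppProj}; hence $q\eta_{\alpha_0}=0$. On the other hand, the relation $(1-q)\Bc=(1-q)\Dc$ from Lemma~\ref{lem:suppProj} gives $(1-q)\HEu_{i_1}\oup=0$, so $(1-q)\eta_{\alpha_0}=0$ as well; therefore $\eta_{\alpha_0}=0$. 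I expect the main obstacle to be the word-combinatorial bookkeeping that isolates exactly which $\eta_\beta$'s contribute to the $(N+1,\alpha_0)$-piece under $\lambda(b)$ and $\rho(b)$: each case is straightforward, but one must carefully track the partial absorptions at the first and last tensor positions, which shorten words by one and could otherwise interfere with the main term $\lambda(b)\eta_{\alpha_0}$.
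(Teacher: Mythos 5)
Your argument is correct and uses the same three ingredients as the paper's proof: centrality tested against an element $b\in\ker E$ placed in a fresh copy of $\Bc$, Lemma~\ref{lem:ip} to replace $b$ by $E(b^*b)^{1/2}\in\Dc$, and Lemma~\ref{lem:suppProj} (the projection $q$ together with $(1-q)\Bc=(1-q)\Dc$) to finish. Where you genuinely differ is the middle estimate. The paper works globally: it applies $\lambda(b_N)-\rho(b_N)$ to the truncation $\eta_{N-1}$ of $\eta=\xh-F(x)\hat{\;}$ onto words with letters $<N$, and uses that $\lambda(b_N)\eta_{N-1}$ and $\rho(b_N)\eta_{N-1}$ are orthogonal (fresh letter at the far left versus the far right), obtaining in one stroke $\lambda(d_1)\eta=0=\rho(d_2)\eta$ and then concluding at the element level from $q(x-F(x))=0$. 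You instead fix one reduced word $\alpha_0$, project the identity $\lambda(b)\hat z=\rho(b)\hat z$ onto the $(N+1,\alpha_0)$-piece, control the cross terms by an $\eps$-tail, and finish componentwise, needing only the left-sided annihilation because $(1-q)\HEu_{i_1}\oup=0$ kills each word piece directly. Both routes work; the paper's orthogonality trick avoids the word-by-word bookkeeping, while yours trades that for a more hands-on case analysis. Two small inaccuracies in your bookkeeping, neither of which damages the proof: (a) on the right-hand side you omitted one contribution, namely the diagonal term coming from $\eta_{(N+1,\alpha_0)}$ itself (the part of $\rho(b)$ in which the $\Dc$-component of the last slot times $b$ keeps the word $(N+1,\alpha_0)$); it is harmless since it too has norm at most $\|b\|\,\eps$. (b) Since $N$ depends on $\eps$, the clean statement is not ``$\lambda(b)\eta_{\alpha_0}=0$ for $b\in\Bc_{N+1}\cap\ker E_{N+1}$'' but rather that $\|\lambda(c)\eta_{\alpha_0}\|$, with $c=E(b^*b)^{1/2}$, is bounded by a multiple of $\|b\|\,\eps$ for every $\eps$; one should pass through Lemma~\ref{lem:ip} (which also requires $N\ge\max\alpha_0$) before letting $\eps\to0$, which your write-up does only afterwards --- a matter of ordering, not of substance.
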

\begin{proof}
It suffices to show $Z(\Mcal)\subseteq\Dc$, for then~\eqref{eq:ZM} follows readily.

Let $x\in Z(\Mcal)$.
Let $\eta=\xh-F(x)\hat{\;}$.
Then
\[
\eta\in\bigoplus_{\substack{k\ge1 \\ i_1,\ldots,i_k\ge1 \\ i_j\ne i_{j+1}}}
\HEu_{i_1}\oup\otimes_\Dc\cdots\otimes_\Dc\HEu_{i_k}\oup\otimes_\Dc L^2(\Dc,\tau_\Dc).
\]
For $N\in\Nats$, let $\eta_N$ be the orthogonal projection of $\eta$ onto the subspace
\[
\bigoplus_{\substack{k\ge1 \\1\le i_1,\ldots,i_k\le N \\ i_j\ne i_{j+1}}}
\HEu_{i_1}\oup\otimes_\Dc\cdots\otimes_\Dc\HEu_{i_k}\oup\otimes_\Dc L^2(\Dc,\tau_\Dc).
\]
Then $\eta_N$ converges in $L^2(\Mcal,\tau)$ to $\eta$ as $N\to\infty$.
Suppose $b\in\Bc\cap\ker E$.
Fix $N\in\Nats$ and
let $b_N$ denote the copy of $b$ in the copy $\Bc_N\subseteq\Mcal$ of $\Bc$.
Then $\lambda(b_N)\eta_{N-1}$ and $\rho(b_N)\eta_{N-1}$ are orthogonal to each other, because they lie in the respective
subspaces
\begin{gather}
\bigoplus_{\substack{k\ge1 \\1\le i_1,\ldots,i_k\le N-1 \\ i_j\ne i_{j+1}}}
\HEu_N\oup\otimes_\Dc\HEu_{i_1}\oup\otimes_\Dc\cdots\otimes_\Dc\HEu_{i_k}\oup\otimes_\Dc L^2(\Dc,\tau_\Dc),  
\label{eq:subspLeft} \\
\bigoplus_{\substack{k\ge1 \\1\le i_1,\ldots,i_k\le N-1 \\ i_j\ne i_{j+1}}}
\HEu_{i_1}\oup\otimes_\Dc\cdots\otimes_\Dc\HEu_{i_k}\oup\otimes_\Dc\HEu_N\oup\otimes_\Dc L^2(\Dc,\tau_\Dc).
\label{eq:subspRight} 
\end{gather}
On the other hand, $\lambda(b_N)F(x)\hat{\:}$ and $\rho(b_N)F(x)\hat{\:}$ lie in the subspace
$\HEu_N\oup\otimes_\Dc L^2(\Dc,\tau_\Dc)$, which is orthogonal to both
of the subspaces~\eqref{eq:subspLeft} and~\eqref{eq:subspRight}.
Thus, we have
\begin{multline*}
0=(b_N x-xb_N)\hat{\;}=\big(\lambda(b_N)-\rho(b_N)\big)\xh \\
=\big(\lambda(b_N)-\rho(b_N)\big)\big(\eta_{N-1}+F(x)\hat{\;}+(\eta-\eta_{N-1})\big)
\end{multline*}
and from the orthogonality relations noted above, we get
\begin{align}
\|\lambda(b_N)\eta_{N-1}&\|_2^2+\|\rho(b_N)\eta_{N-1}\|_2^2  \notag \\
&\le\big\|\lambda(b_N)\eta_{N-1}-\rho(b_N)\eta_{N-1}+(\lambda(b_N)-\rho(b_N))F(x)\hat{\;}\big\|_2^2 \notag \\
&=\|(\lambda(b_N)-\rho(b_N))(\eta-\eta_{N-1})\|_2^2\label{eq:2nms} \\
&\le4\|b\|^2\;\|\eta-\eta_{N-1}\|_2^2. \notag
\end{align}
Consider the elements $d_1=E(b^*b)^{1/2}$ and $d_2=E(bb^*)^{1/2}$ of $\Dc$.
By Lemma \ref{lem:ip}, we have
\[
\|\lambda(b_N)\eta_{N-1}\|_2=\|\lambda(d_1)\eta_{N-1}\|_2,\qquad
\|\rho(b_N)\eta_{N-1}\|_2=\|\rho(d_2)\eta_{N-1}\|_2
\]
and from~\eqref{eq:2nms}, we get
\[
\|\lambda(d_1)\eta_{N-1}\|_2^2+\|\rho(d_2)\eta_{N-1}\|_2^2\le4\|b\|^2\|\eta-\eta_{N-1}\|_2^2.
\]
Letting $N\to\infty$, we get
\begin{equation}\label{eq:lrd0}
\lambda(d_1)\eta=0=\rho(d_2)\eta.
\end{equation}
Let $q=q(E)\in\Dc\cap Z(\Bc)$ be the projection associated to the conditional expectation $E:\Bc\to\Dc$
as described in Lemma~\ref{lem:suppProj}.
From~\eqref{eq:lrd0} and letting $b$ run through all of $\ker E$, we get $\lambda(q)\eta=\rho(q)\eta=0$.
This yields
$q(x-F(x))=0$,
so $x-F(x)\in(1-q)\Bc=(1-q)\Dc$.
But $x-F(x)\perp\Dc$, so we must have $x-F(x)=0$ and $x\in\Dc$.
\end{proof}

The aim of the remainder of this section
(realized in Corollary~\ref{cor:traces}, below) is to characterize the normal traces on a von Neumann subalgebra 
whose compositions with a given conditional expectation are traces on the larger von Neumann algebra.
The result is quite natural and is perhaps known.
It may also be possible to prove it directly using state decompositions or averaging techniques, rather than free products.
However, as we get it from the results above with very little extra effort, it seems worth doing it here.
Furthermore, it is clearly related to the proof of our main result, Theorem~\ref{thm:choq}, and indeed
to the improved characterization of extremality of elements of $\TQSS(A)$,
though we don't actually use it in the proof.

Let $\Dc\subseteq\Bc$ be a unital inclusion of finite von Neumann algebras with 
a faithful conditional expectation $E:\Bc\to\Dc$.
Suppose there is a normal faithful tracial state $\rho$ on $\Dc$ such that $\rho\circ E$ is a trace on $\Bc$.
Let
\begin{equation}\label{eq:C}
\Cc=Z(\Bc)\cap\Dc.
\end{equation}
Let $(\Mcal,F)$ be the free product of infinitely many copies of $(\Bc,E)$ with amalgamation over $\Dc$,
as in~\eqref{eq:MF1}.
Due to the existence of $\rho$, by Proposition~\ref{prop:amalgtr}, $\Mcal$ is a finite von Neumann algebra.
Let $\eta$ be the center-valued trace on $\Mcal$ and let $\eta\restrict_\Dc$ denote its restriction to $\Dc$.
By Theorem~\ref{thm:ZM}, the center of $\Mcal$ is $\Cc$ as in~\eqref{eq:C}.

Let $\alpha$ be a permutation of $\Nats$ that has no proper, nonempty, invariant subsets;
thus, $\alpha$ results from the shift on $\Ints$ after fixing a bijection from $\Nats$ to $\Ints$.
Let $\alphah$ be the automorphism of $\Mcal$ that permutes the copies of $\Bc$
in the free product construction~\eqref{eq:MF1} according to $\alpha$.
\begin{lemma}\label{lem:etaalpha}
We have $\eta=\eta\circ\alphah$
\end{lemma}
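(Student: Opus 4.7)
The plan is to use uniqueness of the center-valued trace on a finite von Neumann algebra. First I would verify that $\widehat{\alpha}$ is genuinely an automorphism of $\Mcal$ (this follows from the universal property of the amalgamated free product: the permutation $\alpha$ of the index set assembles the isomorphisms $\Bc_i \to \Bc_{\alpha(i)}$ into an automorphism that fixes the common amalgamated subalgebra $\Dc$ pointwise). The key observation is that $\widehat{\alpha}$ is the identity on $\Dc$ and therefore, by Theorem~\ref{thm:ZM}, is the identity on $Z(\Mcal) = \Cc = Z(\Bc)\cap\Dc$.

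Next I would show that $\eta \circ \widehat{\alpha} : \Mcal \to Z(\Mcal)$ is itself a normal tracial conditional expectation onto $Z(\Mcal)$. Normality is automatic since $\widehat{\alpha}$ is a normal automorphism and $\eta$ is normal. Traciality comes from
\[
\eta(\widehat{\alpha}(xy)) = \eta(\widehat{\alpha}(x)\widehat{\alpha}(y)) = \eta(\widehat{\alpha}(y)\widehat{\alpha}(x)) = \eta(\widehat{\alpha}(yx)).
\]
The conditional expectation property onto $Z(\Mcal)$ reduces to checking $\eta(\widehat{\alpha}(c)) = c$ for $c \in Z(\Mcal)$, which holds since $\widehat{\alpha}(c) = c$ by the previous paragraph and $\eta$ restricts to the identity on its image.

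Finally I would invoke the standard uniqueness of the center-valued trace on the finite von Neumann algebra $\Mcal$ (finiteness being guaranteed by Proposition~\ref{prop:amalgtr}, which supplies the faithful normal trace $\tau = \tau_\Dc \circ F$): any two normal tracial conditional expectations onto the center must coincide. Thus $\eta \circ \widehat{\alpha} = \eta$, as claimed. I do not expect any real obstacle here; the whole argument is a clean application of uniqueness, with the only substantive input being Theorem~\ref{thm:ZM} to know that $\widehat{\alpha}$ acts trivially on $Z(\Mcal)$. Notably, the hypothesis that $\alpha$ has no proper nonempty invariant subset is not used at this stage; one presumes it enters only in the subsequent application of the lemma.
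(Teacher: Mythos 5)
Your argument is correct and takes essentially the same approach as the paper: the paper applies the Dixmier averaging characterization $\{\eta(x)\}=\Cc\cap\overline{\conv\{uxu^*\mid u\in\Uc(\Mcal)\}}$ together with the fact that $\alphah$ fixes $\Cc\subseteq\Dc$ pointwise, which is precisely the standard proof of the uniqueness of the center-valued trace that you invoke after checking that $\eta\circ\alphah$ is again such a trace. Both arguments rest on the same two inputs, Theorem~\ref{thm:ZM} (so that $Z(\Mcal)\subseteq\Dc$) and $\alphah\restrict_\Dc=\id$, and you are right that the hypothesis that $\alpha$ has no proper nonempty invariant subsets is not needed for this lemma.
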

\begin{proof}
Dixmier averaging says that for any $x\in\Mcal$, $\eta(x)$ is
the unique element in the intersection of  $\Cc$ and the norm closed convex hull of
the unitary conjugates of $x$.
(See, for example, Section 8.3 of~\cite{KR83}).
In symbols, this is
\[
\{\eta(x)\}=\Cc\cap\overline{\conv\{uxu^*\mid u\in \Uc(\Mcal)\}}.
\]
Since $\Cc\subseteq\Dc$, $\alphah$ leaves every element of $\Cc$ fixed.
Thus,
\begin{align*}
\{\eta(x)\}&=\alphah(\{\eta(x)\})=\alphah(\Cc)\cap\alphah(\overline{\conv\{uxu^*\mid u\in \Uc(\Mcal)\}}) \\
&=\Cc\cap\overline{\conv\{u\,\alphah(x)u^*\mid u\in \Uc(\Mcal)\}})
=\{\eta(\alphah(x))\}.
\end{align*}
\end{proof}

\begin{lemma}\label{lem:etaE}
We have $\eta=\eta\circ F$.
\end{lemma}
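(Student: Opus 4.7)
The plan is to exploit the $\alphah$-invariance of $\eta$ from Lemma~\ref{lem:etaalpha} via a mean-ergodic argument: $\alphah$ mixes the free factors, so Cesaro averages of $\alphah^k(x)$ push everything in $\ker F$ to zero in $L^2(\Mcal,\tau)$, leaving only $F(x)$; normality of $\eta$ then closes the argument.

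Fix $x\in\Mcal$ and write $x=F(x)+y$ with $y\in\ker F$. Because $\alphah$ merely permutes the free factors, it fixes $\Dc$ pointwise and satisfies $F\circ\alphah=F$ (by the uniqueness of the $\Dc$-bimodule projection, or directly from the freeness moment formulas), so $\alphah^k(F(x))=F(x)$ for every $k$. Lemma~\ref{lem:etaalpha} together with linearity of $\eta$ then gives, for every $n\ge1$,
$$\eta(x)=\frac{1}{n}\sum_{k=0}^{n-1}\eta(\alphah^k(x))=\eta(F(x))+\eta(A_n(y)),\qquad A_n(y):=\frac{1}{n}\sum_{k=0}^{n-1}\alphah^k(y).$$
Thus it suffices to show $A_n(y)\to0$ ultraweakly. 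Since $\|A_n(y)\|\le\|y\|$ and $\Mcal$ is finite with faithful normal trace $\tau$, it is enough to verify that $\|A_n(y)\|_2\to0$ in $L^2(\Mcal,\tau)$: a bounded sequence that is $L^2$-null is ultraweakly null via the identification $\Mcal_{\ast}\cong L^1(\Mcal,\tau)$, and then normality of $\eta$ delivers the result.

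I would first verify $\|A_n(w)\|_2\to0$ on a reduced word $w=b_1\cdots b_m$ with $b_j\in\Bc_{i(j)}\cap\ker E_{i(j)}$, $i(j)\ne i(j+1)$, and all $i(j)$ in some finite set $S\subset\Nats$. Since $\alpha$ is a single-orbit permutation (conjugate to the shift on $\Ints$), for $|j|$ sufficiently large we have $\alpha^j(S)\cap S=\emptyset$; then $w^*\alphah^j(w)$ is an alternating product of $2m$ kernel elements, so by freeness $F(w^*\alphah^j(w))=0$ and hence $\tau(w^*\alphah^j(w))=0$. Using $\alphah$-invariance of $\tau$, expanding $\|A_n(w)\|_2^2$ as a double sum reduces it to $\frac{1}{n^2}\sum_{k,l=0}^{n-1}\tau(w^*\alphah^{k-l}(w))$, which has only $O(n)$ nonzero terms, each bounded by $\|w\|^2$, yielding $\|A_n(w)\|_2^2=O(1/n)$. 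Linearity extends this to finite linear combinations of reduced words, and $\|\cdot\|_2$-density of such combinations in $\ker F$ combined with $\|\cdot\|_2$-contractivity of each $A_n$ (since $\alphah$ is a trace-preserving automorphism) extends it to arbitrary $y\in\ker F$.

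The main obstacle is the freeness computation showing that $\alphah$ acts ``mean-ergodically'' on $\ker F$; the remaining pieces---invariance of $\eta$ under $\alphah$, passage from $L^2$- to ultraweak convergence in a finite von Neumann algebra, and normality of the center-valued trace---are all standard once that ingredient is in hand.
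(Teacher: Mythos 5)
Your argument is correct and takes essentially the same route as the paper: both rest on Lemma~\ref{lem:etaalpha}, the $\|\cdot\|_2$-convergence of the ergodic averages $\frac1n\sum_{k=0}^{n-1}\alphah^k(x)$ to $F(x)$ (hence convergence in a topology where the normal center-valued trace is continuous on bounded sets), and normality of $\eta$. The only difference is that the paper cites this mean-ergodic convergence as well known, whereas you verify it explicitly through the disjoint-support/freeness computation on reduced words -- a sound but inessential elaboration.
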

\begin{proof}
It is well known and not difficult to check that for all $x\in\Mcal$, the ergodic averages
\[
\frac1n\sum_{k=0}^{n-1}\alphah^k(x)
\]
converge in $\|\cdot\|_2$-norm as $n\to\infty$ and, thus, also in strong operator topology, to $F(x)$.
Because the center valued trace is normal, using Lemma~\ref{lem:etaalpha}, we get
\[
\eta(F(x))=\lim_{n\to\infty}\frac1n\sum_{k=0}^{n-1}\eta(\alphah^k(x))=\eta(x).
\]
\end{proof}

For a von Neumann algebra $\Nc$, we let $\NTS(\Nc)$ denote the set of normal tracial states on $\Nc$.
\begin{cor}\label{cor:traces}
The map
\begin{equation}\label{eq:taumap}
\tau\mapsto \tau\circ\eta\restrict_\Dc
\end{equation}
is a bijection
from $\NTS(Z(\Bc)\cap\Dc)$ onto
\begin{equation}\label{eq:rhos}
\{\rho\in\NTS(\Dc)\mid\rho\circ E\text{ a trace on }\Bc\}.
\end{equation}
\end{cor}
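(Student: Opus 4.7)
The plan is to use the center-valued trace $\eta:\Mcal\to\Cc$ (which takes values in $\Cc=Z(\Bc)\cap\Dc$ by Theorem~\ref{thm:ZM}) together with Lemma~\ref{lem:etaE} to translate between traces on $\Dc$ and traces on $\Cc$.

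First I would verify that the map is well defined, i.e.\ that for $\tau\in\NTS(\Cc)$, the state $\tau\circ\eta\restrict_\Dc$ belongs to the set~\eqref{eq:rhos}. Normality and positivity are clear since $\eta$ is a normal conditional expectation and $\tau$ is a normal state; traciality on $\Dc$ is automatic because $\eta$ is tracial on all of $\Mcal$. To check that the composition with $E$ is a trace on $\Bc$, observe that for $b\in\Bc$ (viewed as sitting inside $\Mcal$ as, say, $\Bc_1$) we have $F(b)=E(b)$, so Lemma~\ref{lem:etaE} gives $\eta(b)=\eta(F(b))=\eta\restrict_\Dc(E(b))$. Hence $\tau\circ\eta\restrict_\Dc\circ E=\tau\circ\eta\restrict_\Bc$, which is tracial on $\Bc$ for the same reason.

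Next, for surjectivity, given $\rho$ in the set~\eqref{eq:rhos}, Proposition~\ref{prop:amalgtr} shows that $\rho\circ F$ is a normal trace on $\Mcal$. Any normal tracial state $\phi$ on the finite von Neumann algebra $\Mcal$ satisfies $\phi=\phi\restrict_{Z(\Mcal)}\circ\eta$ (this is the defining universal property of the center-valued trace, cf.\ Section~8.3 of~\cite{KR83}). Applying this to $\phi=\rho\circ F$ and setting $\tau:=\rho\restrict_{\Cc}\in\NTS(\Cc)$ (which equals $(\rho\circ F)\restrict_{\Cc}$ since $F$ fixes $\Dc\supseteq\Cc$), we get $\rho\circ F=\tau\circ\eta$ on $\Mcal$. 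Restricting this identity to $\Dc$, where $F$ is the identity, yields $\rho=\tau\circ\eta\restrict_\Dc$, as required.

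For injectivity, I would note that $\Cc\subseteq\Dc$ and $\eta$ acts as the identity on its range $\Cc=Z(\Mcal)$, so $\eta\restrict_{\Cc}=\id_{\Cc}$. Thus if $\tau_1\circ\eta\restrict_\Dc=\tau_2\circ\eta\restrict_\Dc$, restricting to $\Cc$ gives $\tau_1=\tau_2$. There is no real obstacle here; the main conceptual input is the combination of Theorem~\ref{thm:ZM} (identifying $Z(\Mcal)$) with Lemma~\ref{lem:etaE} (telling us that the center-valued trace on $\Mcal$ only sees the $F$-image), and these have already been established.
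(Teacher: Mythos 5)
Your proof is correct and follows essentially the same route as the paper's: well-definedness via Lemma~\ref{lem:etaE} applied to a copy of $\Bc$ in $\Mcal$, surjectivity by combining Proposition~\ref{prop:amalgtr} with the standard fact that normal tracial states on the finite von Neumann algebra $\Mcal$ factor through the center-valued trace (Theorem~8.3.10 of~\cite{KR83}), and injectivity by restricting to $\Cc=Z(\Mcal)$. Your explicit identification $\tau=\rho\restrict_\Cc$ is a harmless refinement of the paper's appeal to the bijection $\NTS(\Cc)\to\NTS(\Mcal)$, not a different argument.
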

\begin{proof}
It is clear that the map~\eqref{eq:taumap} is injective.

We view $\Bc$ as embedded in $\Mcal$ by identification of $\Bc$ with any of the copies arising in the free
product construction~\eqref{eq:MF1}.
Since, by Lemma~\ref{lem:etaE}, $\eta=\eta\circ E=\eta\restrict_\Dc\circ E$,
if $\tau\in\NTS(\Cc)$ and $\rho=\tau\circ\eta\restrict_\Dc$,
then $\rho\circ E=\tau\circ(\eta\restrict_\Bc)=(\tau\circ\eta)\restrict_\Bc$
is a trace on $\Bc$.
Thus, the map~\eqref{eq:taumap} goes into the set~\eqref{eq:rhos}.

To see that it is onto, 
suppose $\rho$ belongs to the set~\eqref{eq:rhos}.
Since $\Mcal$ is a finite von Neumann algebra, by a standard theory (see, for example, Theorem 8.3.10 of~\cite{KR83}),
the map $\tau\mapsto\tau\circ\eta$ is a bijection from $\NTS(\Cc)$ onto $\NTS(\Mcal)$.
By Proposition~\ref{prop:amalgtr} $\rho\circ F$ is a normal tracial state on $\Mcal$, so equals $\tau\circ\eta$ for some
$\tau\in\NTS(\Cc)$.
Thus, $\rho=\rho\circ F\restrict_\Dc=\tau\circ\eta\restrict_\Dc$, as required.
\end{proof}

\section{The conditional expectation onto the tail algebra in an amalgamated free product}
\label{sec:tail}

For a symmetric state $\psi$ on the universal free product $C^*$-algebra $\Afr=*_1^\infty A$,
we let $\Mcal_\psi$ denote the von Neumann algebra generated by the image of $\Afr$ under the GNS representation $\pi_\psi$
of $\Afr$ on $L^2(\Afr,\psi)$
arising from $\psi$ and let $\psih$ denote the normal extension of $\psi$ to $\Mcal_\psi$, which is the vector state for the vector
of $L^2(\Afr,\psi)$ corresponding to the identity element of $\Afr$.
The tail algebra $\Tc_\psi$ is the von Neumann subalgebra
\[
\Tc_\psi=\bigcap_{n\ge1}W^*(\bigcup_{k\ge n}\lambda_k(A))\subseteq\Mcal_\psi
\]
where $\lambda_k:A\to\Afr$ is the embedding onto the $k$-th copy of $A$ in the universal free product $C^*$-algebra.
Note that the action of the permutation group $S_\infty$ on $\Afr$ by permuting the embedded copies of $A$ results in a
$\psi$-preserving action of $S_\infty$ on $\Mcal_\psi$;
we let $\Fc_\psi$ denote the fixed point subalgebra of this action and we always have $\Tc_\psi\subseteq\Fc_\psi$
(see Lemma 5.1.3 of~\cite{DKW}).
By Proposition 5.2.4 of~\cite{DKW}, if the restriction of $\psih$ to $\Tc_\psi$ has faithful GNS representation
(in particular, if $\psih$ is faithful on $\Mcal_\psi$), then there is a normal, $\psih$-preserving conditional expectation
$E_\psi:\Mcal_\psi\to\Tc_\psi$;
furthermore, if also the restriction of $\psih$ to $\Fc_\psi$ has faithful GNS representation
(in particular, if $\psih$ is faithful on $\Mcal_\psi$), then $\Tc_\psi=\Fc_\psi$. 

\begin{prop}\label{prop:tail}
Let $\Dc\subseteq\Bct$ be a unital von Neumann subalgebra with $\Et:\Bct\to\Dc$ a normal, faithful, conditional 
expectation.
Let
\[
(\Mcalt,\Ft)\cong(*_\Dc)_1^\infty(\Bct,\Et)
\]
be the amalgamated free product of von Neumann algebras.
Suppose $\rho$ is a normal faithful state on $\Dc$.
Suppose $A$ is a unital C$^*$-algebra and $\sigma:A\to\Bct$ is a unital $*$-homomorphism.
Let $\psi=\rho\circ\Ft\circ(*_1^\infty\sigma):\Afr=*_1^\infty A\to\Cpx$.
By Proposition~3.1 of~\cite{DKW}, $\psi\in\QSS(A)$.
Then $\Mcal_\psi$
is canonically identified with a von Neumann subalgebra of $\Mcalt$ with the tail algebra $\Tc_\psi$ identified with a subalgebra of $\Dc$.
Moreover, the normal state $\psih$ on $\Mcal_\psi$ is identified with the restriction of the state $\rho\circ\Ft$ to $\Mcal_\psi$,
which is faithful,
and the normal conditional expectation $E_\psi:\Mcal_\psi\to\Tc_\psi$ is identified with the restriction to $\Mcal_\psi$ of $\Ft$.
\end{prop}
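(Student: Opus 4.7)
The plan is to realize $\Mcal_\psi$ directly inside $\Mcalt$ via $\pi := *_1^\infty\sigma : \Afr \to \Mcalt$ (so $\psi = (\rho\circ\Ft)\circ\pi$) and then extract $\Tc_\psi$ and $E_\psi$ from the amalgamated free product structure of $\Mcalt$. Since each $\Et_i=\Et$ is faithful, Ueda's construction guarantees $\Ft$ is faithful, so $\rho\circ\Ft$ is a faithful normal state on $\Mcalt$ with separating cyclic vector $\oneh\in L^2(\Mcalt,\rho\circ\Ft)$. The closure of $\pi(\Afr)\oneh$ realizes the GNS Hilbert space of $\psi$, and the von Neumann algebra it supports identifies $\Mcal_\psi$ with $\pi(\Afr)''\subseteq\Mcalt$, identifies $\psih$ with $(\rho\circ\Ft)\restrict_{\Mcal_\psi}$ (which is therefore faithful), and identifies $\lambda_k(A)$ with $\sigma_k(A)\subseteq\Bct_k$.

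For the tail, let $\Mcalt_n:=W^*(\bigcup_{k\ge n}\Bct_k)\cong(*_\Dc)_{k\ge n}(\Bct,\Et)$. Using the Fock-type decomposition
\[
L^2(\Mcalt,\rho\circ\Ft) = L^2(\Dc,\rho) \oplus \bigoplus_{\substack{p\ge1 \\ i_1\ne i_2\ne\cdots\ne i_p}} \HEu_{i_1}\oup \otimes_\Dc \cdots \otimes_\Dc \HEu_{i_p}\oup \otimes_\Dc L^2(\Dc,\rho),
\]
the subspace $L^2(\Mcalt_n)$ only involves tensor terms whose indices lie in $\{n,n+1,\ldots\}$, so $\bigcap_n L^2(\Mcalt_n) = L^2(\Dc)$ and hence $\bigcap_n\Mcalt_n = \Dc$. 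Then
\[
\Tc_\psi = \bigcap_n W^*\Bigl(\bigcup_{k\ge n}\sigma_k(A)\Bigr) \subseteq \bigcap_n\Mcalt_n = \Dc.
\]

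To identify $E_\psi$ with $\Ft\restrict_{\Mcal_\psi}$, I would use ergodic averaging. Let $\alphah$ be the shift automorphism of $\Mcalt$ as in Section~\ref{sec:center}. Since the $\alphah$-translates of any off-diagonal tensor are mutually orthogonal in the Fock decomposition, the averages $\frac{1}{n}\sum_{k=0}^{n-1}\alphah^k(x)$ converge in $\|\cdot\|_2$-norm (with respect to $\rho\circ\Ft$) to $\Ft(x)$ for every $x\in\Mcalt$. When $x\in\Mcal_\psi$ these averages lie in $\Mcal_\psi$ (as $\alphah$ permutes the $\sigma_k(A)$'s), and $L^2$-convergence of a norm-bounded sequence forces SOT-convergence; SOT-closedness of $\Mcal_\psi$ gives $\Ft(x)\in\Mcal_\psi$. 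Thus $\Ft(\Mcal_\psi)\subseteq\Dc\cap\Mcal_\psi$, which sits in $\Fc_\psi$ because the $S_\infty$-action fixes $\Dc$ pointwise. By Proposition~5.2.4 of~\cite{DKW} (using that $\psih$ is faithful), $\Fc_\psi=\Tc_\psi$, so $\Ft(\Mcal_\psi)\subseteq\Tc_\psi$. Combined with $\Ft\restrict_{\Tc_\psi}=\id$ (since $\Tc_\psi\subseteq\Dc$) and the $\Dc$-bimodule property of $\Ft$, the map $\Ft\restrict_{\Mcal_\psi}$ is a normal $\psih$-preserving conditional expectation onto $\Tc_\psi$; uniqueness of such a conditional expectation (from faithfulness of $\psih$) yields $\Ft\restrict_{\Mcal_\psi}=E_\psi$.

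The main technical obstacle will be adapting the Fock-space arguments of Section~\ref{sec:center} to the non-tracial setting here: the identification $\bigcap_n\Mcalt_n=\Dc$ and the $L^2$-convergence of the ergodic averages both rely on careful analysis of the Hilbert $\Dc$-module decomposition with the state $\rho\circ\Ft$ in place of a trace, rather than on the tracial $\|\cdot\|_2$-norm used in Lemma~\ref{lem:etaE}.
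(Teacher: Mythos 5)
Your proposal is correct, and for the decisive last step it takes a genuinely different route from the paper, while the first part (realizing $\Mcal_\psi$ as the SOT closure of $(*_1^\infty\sigma)(\Afr)$ in $\Mcalt$, with $\psih$ the restriction of the faithful state $\rho\circ\Ft$, faithfulness coming from Ueda~\cite{U99}) is the same. For the tail you prove $\Tc_\psi\subseteq\bigcap_n\Mcalt_n=\Dc$ directly from the Fock-module decomposition, whereas the paper shows $\Fc_\psi\subseteq\Dc$ and uses $\Tc_\psi=\Fc_\psi$ (Proposition 5.2.4 of~\cite{DKW}); these are essentially the same computation, but to pass from $\bigcap_n L^2(\Mcalt_n)=L^2(\Dc,\rho)$ to $\bigcap_n\Mcalt_n=\Dc$ you should go through the separating vector: $x\oneh\in L^2(\Dc,\rho)$ forces $x\oneh=\Ft(x)\oneh$, hence $x=\Ft(x)\in\Dc$. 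The real divergence is the identification $E_\psi=\Ft\restrict_{\Mcal_\psi}$: the paper matches $E_\psi$ against the explicit construction of Theorem 5.1.10 and Lemma 5.1.9 of~\cite{DKW}, realizing $E_\psi(x)$ as the WOT-limit of powers of the one-sided shift $\alphat$ and reading off $\Ft$ from the Hilbert-module structure, while you show that $\Ft\restrict_{\Mcal_\psi}$ is itself a normal $\psih$-preserving conditional expectation onto $\Tc_\psi$ (via Ces\`aro averages over the shift automorphism, plus $\Dc\cap\Mcal_\psi\subseteq\Fc_\psi=\Tc_\psi$) and then invoke uniqueness of $\psih$-preserving conditional expectations onto a fixed subalgebra, which indeed follows from faithfulness of $\psih$ by the computation $\psih\big(y^*(E_1(x)-E_2(x))\big)=\psih(y^*x)-\psih(y^*x)=0$ with $y=E_1(x)-E_2(x)\in\Tc_\psi$. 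Your route trades the internal details of the~\cite{DKW} construction for an ergodic lemma and an abstract uniqueness argument, which is arguably more self-contained; the paper's route is shorter because it cites those details. Two points to tighten: first, the ``non-tracial obstacle'' you flag is harmless --- the mean ergodic theorem applied to the unitary implementing $\alphah$ on $L^2(\Mcalt,\rho\circ\Ft)$, whose fixed subspace is exactly $L^2(\Dc,\rho)$ because $\alphah$ moves the tensor components along infinite orbits, gives $\tfrac1n\sum_{k=0}^{n-1}\alphah^k(x)\to\Ft(x)$ in $\|\cdot\|_{\rho\circ\Ft}$ with no traciality needed, and norm-boundedness upgrades this to SOT convergence since $\oneh$ is cyclic for the commutant; second, you should say explicitly that the $S_\infty$-action on $\Mcal_\psi$ coincides with the restriction of the permutation automorphisms of $\Mcalt$, which fix $\Dc$ pointwise --- this is what legitimizes $\Dc\cap\Mcal_\psi\subseteq\Fc_\psi$ (the paper's own assertion that $\Fc_\psi\subseteq\Dc$ rests on the same identification).
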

\begin{proof}
Note that under the hypotheses, $\rho\circ\Ft$ is a faithful state on $\Mcalt$ (by Ueda's result~\cite{U99},
as discussed in Section~\ref{sec:amalgtr} above).
Thus, the GNS Hilbert space $L^2(\Afr,\psi)$ is a subspace of $L^2(\Mcalt,\rho\circ\Ft)$
and $\Mcal_\psi$ is realized as the strong operator topology closure in $\Mcalt$ of $(*_1^\infty\sigma)(\Afr)$
with $\psih$ the restriction to $\Mcal_\psi$ of $\rho\circ\Ft$.
Now, by examining the free product structure of the Hilbert space $L^2(\Mcalt,\rho\circ\Ft)$,
we see that the fixed point subalgebra $\Fc_\psi$ must lie in $\Dc$, and since $\psih$ is faithful on $\Mcal_\psi$,
we have $\Tc_\psi=\Fc_\psi\subseteq\Dc$.

We must only show that the conditional expectation $E_\psi:\Mcal_\psi\to\Tc_\psi$ equals the restriction to $\Mcal_\psi$
of $\Ft$.
Since both of these conditional expectations are normal, it will suffice to show their agreement on elements of $\pi_\psi(\Afr)$.
For this, we appeal to the construction of the conditional expectation $G_\psi$ found in Theorem 5.1.10 of~\cite{DKW};
since $\psih$ is faithful on $\Mcal_\psi$, this conditional expectation $G_\psi$ coincides with the restriction to $\pi_\psi(\Afr)$
of $E_\psi$.
The $*$-endomorphism $\alpha$ appearing in the aforementioned construction of $G_\psi$ must, by Lemma 5.1.9 of~\cite{DKW},
agree with the normal ``shift'' $*$-endomorphism $\alphat$ of $\Mcalt$, that sends the $i$-th copy of $\Bct$ in $\Mcalt$ to the $(i+1)$-st
copy (and which is easily seen to exist, by the construction outlined in Section~\ref{sec:amalgtr}).
Thus, (see Theorem 5.1.10 of~\cite{DKW}), 
\[
E_\psi(x)={\rm WOT-}\lim_{n\to\infty}\alphat^n(x)
\]
for all $x\in\pi_\psi(\Afr)$, and by the structure of the free product Hilbert space $L^2(\Mcalt,\rho\circ\Ft)$,
we conclude $E_\psi(x)=\Ft(x)$.
\end{proof}

\begin{remark}\label{rmk:tailgen}
In the situation of the previous proposition, by the methods of Section~7
of~\cite{DKW}
(see in particular Theorem 7.3 of~\cite{DKW})
the tail algebra of $\psi$ is equal to the smallest von Neumann subalgebra $\Dc_\infty$ of $\Dc$
that contains
\begin{equation}\label{eq:Fa}
\Ft(\sigma(a_1)d_1\sigma(a_2)\cdots d_{n-1}\sigma(a_n))
\end{equation}
for every $a_1,\ldots a_n\in A$
and every $d_1,\ldots,d_{n-1}\in\Dc_\infty$.
Thus, letting $\Dc_0=\Cpx1$ and for $p\ge1$ letting $\Dc_p$ be the von Neumann algebra generated by
all expressions of the form~\eqref{eq:Fa} for $a_j\in A$ and $d_1,\ldots,d_{n-1}\in\Dc_{p-1}$,
we have that $\Dc_\infty$ equals the von Neumann algebra generated by $\bigcup_{p=0}^\infty\Dc_p$.
\end{remark}

\section{The simplex of tracial quantum symmetric states}
\label{sec:Choq}

Let $A$ be a unital C$^*$-algebra and let $\TQSS(A)$
be the compact, convex set of tracial, quantum symmetric states
on $\Afr=*_1^\infty A$.
We assume that $A$ has a tracial state,
so that $\TQSS(A)$ is nonempty, and we assume that $A\ne\Cpx$.

By
Theorem~7.6
of~\cite{DKW},
$\TQSS(A)$ is in bijection with the set of (equivalence classes of) quintuples
$(\Bc,\Dc,E,\sigma,\rho)$ where $E:\Bc\to\Dc\subseteq\Bc$ is a faithful conditional expectation of von Neumann algebras,
$\sigma:A\to\Bc$ is an injective, unital $*$-homomorphism and $\rho$ is a normal faithul, tracial state on $\Dc$
such that $\rho\circ E$ is a trace on $\Bc$, and certain minimality conditions are satisfied.
These minimality conditions are that $\Bc$ is generated by $\Dc\cup\sigma(A)$ and $\Dc$ is the smallest unital von Neumann
subalgebra of $\Bc$ that satisfies $E(d_0\sigma(a_1)d_1\cdots\sigma(a_n)d_n)\in\Dc$ whenever $n\in\Nats$, $d_0,\ldots,d_n\in\Dc$
and $a_1,\ldots,a_n\in A$.
Given a quintuple $(\Bc,\Dc,E,\sigma,\rho)$, one constructs the amalgamated free product von Neumann algebra
\begin{equation}\label{eq:MF}
(\Mcal,F)=(*_\Dc)_1^\infty(\Bc,E)
\end{equation}
of infinitely many copies of $(\Bc,E)$ and one takes the free product $*$-homo\-morph\-ism $*_1^\infty\sigma:\Afr\to\Mcal$ arising from
the universal property, sending the $i$-th copy of $A$ into the $i$-th copy of $\Bc$.
The tracial state $\psi=\rho\circ F\circ(*_1^\infty\sigma)$ on $\Afr$
is the tracial quantum symmetric state of $A$ that corresponds 
to $(\Bc,\Dc,E,\sigma,\rho)$ under the bijection refered to above.
Then $\Dc=\Tc_\psi$ is the tail algebra and $\Mcal=\Mcal_\psi$ is the von Neumann algebra generated by the GNS representation of $\psi$.

The extreme points of $\TQSS(A)$ were characterized in Theorem~8.2
of~\cite{DKW} as corresponding to the set of quintuples
$(\Bc,\Dc,E,\sigma,\rho)$ so that $\rho$ is extreme among the set $R(E)$ of tracial states of $\Dc$ so that $\rho\circ E$ is a trace
on $\Bc$.
In fact, we arrive at a better characterization of the extreme tracial quantum symmetric states below.

Note that $\TQSS(A)$ is a closed convex subset of the tracial state space, $TS(\Afr)$, of $\Afr$.
The tracial state space of any C$^*$-algebra is known to be a Choquet simplex (see, for example Theorem 3.1.18
of~\cite{Sa71})
and the extreme points of it are the tracial states that are factor states.

\begin{thm}\label{thm:choq}
$\TQSS(A)$ is a Choquet simplex and is a face of $TS(\Afr)$.
Moreover, for $\psi\in\TQSS(A)$ with 
corresponding quintuple $(\Bc,\Dc,E,\sigma,\rho)$, 
the following are equivalent:
\begin{enumerate}[(i)]
\item $\psi$ is an extreme point of $\TQSS(A)$
\item $\psi$ is an extreme point of $TS(\Afr)$
\item $\Dc\cap Z(\Bc)=\Cpx1$.
\end{enumerate}
\end{thm}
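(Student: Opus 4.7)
The plan is to prove that $\TQSS(A)$ is a face of $TS(\Afr)$, from which the simplex property follows immediately: any closed face of a Choquet simplex is itself a Choquet simplex, and $TS(\Afr)$ is a Choquet simplex (for example by Theorem 3.1.18 of~\cite{Sa71}). The equivalence (i)$\Leftrightarrow$(ii) is then automatic, since an extreme point of a face of a convex set is exactly an extreme point of the ambient set that lies in the face.

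For the face property, fix $\psi\in\TQSS(A)$ with minimal quintuple $(\Bc,\Dc,E,\sigma,\rho)$ and form $(\Mcal,F)=(*_\Dc)_1^\infty(\Bc,E)$. First I would check that $\Mcal_\psi=\Mcal$ and $\psih=\rho\circ F$ under the identification of Proposition~\ref{prop:tail}: the tail algebra $\Tc_\psi$ equals $\Dc$ by the minimality of the quintuple, so $\Dc\subseteq\Mcal_\psi$, and since $\Bc$ is generated by $\Dc\cup\sigma(A)$, every copy $\Bc_i$ of $\Bc$ in $\Mcal$ sits inside $\Mcal_\psi$, giving $\Mcal_\psi=\Mcal$. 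By Ueda's result~\cite{U99}, $\psih$ is faithful on $\Mcal$. Now suppose $\psi=t\psi_1+(1-t)\psi_2$ with $\psi_j\in TS(\Afr)$ and $t\in(0,1)$. Each $\psi_j$ satisfies $\psi_j\le\psi/\min(t,1-t)$, so it extends to a normal tracial positive functional $\psih_j$ on $\Mcal$ dominated by a multiple of $\psih$. By Radon--Nikodym in the finite von Neumann algebra $(\Mcal,\psih)$ there is a positive $h_j\in\Mcal$ with $\psih_j=\psih(h_j\,\cdot)$; the traciality of $\psih_j$ together with the faithfulness of $\psih$ forces $[h_j,x]=0$ for all $x\in\Mcal$, so $h_j\in Z(\Mcal)$. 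By Theorem~\ref{thm:ZM}, $h_j\in\Dc\cap Z(\Bc)$. Defining $\rho_j=\rho(h_j\,\cdot)\in\NTS(\Dc)$, the centrality of $h_j$ in $\Bc$ together with the trace property of $\rho\circ E$ shows that $\rho_j\circ E$ is a trace on $\Bc$; moreover, since $h_j\in\Dc$, we have $\psih_j=\rho_j\circ F$. Applying Proposition~3.1 of~\cite{DKW} then yields $\psi_j=\rho_j\circ F\circ(*_1^\infty\sigma)\in\TQSS(A)$, which is what we needed.

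For (ii)$\Leftrightarrow$(iii): the extreme points of the tracial state space of a unital C$^*$-algebra are exactly the factorial traces, that is, those whose GNS von Neumann algebra is a factor. Since $\Mcal_\psi=\Mcal$, Theorem~\ref{thm:ZM} identifies $Z(\Mcal_\psi)$ with $\Dc\cap Z(\Bc)$, so $\psi$ is factorial if and only if $\Dc\cap Z(\Bc)=\Cpx1$.

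I expect the main technical point to be the Radon--Nikodym step and the checks that $h_j\in Z(\Mcal)$, that $\rho_j\circ E$ is a trace, and that $\psih_j=\rho_j\circ F$ — all short, but each crucially using the center computation from Theorem~\ref{thm:ZM} and the fact that $h_j$ belongs to $\Dc\cap Z(\Bc)$. Once those ingredients are in place, the face argument, the simplex conclusion, and the characterization of extremality all fall out together.
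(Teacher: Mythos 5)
Your proposal is correct, but it reaches the face property by a genuinely different route than the paper. The paper never decomposes $\psi$ inside $TS(\Afr)$ directly: it proves (ii)$\Leftrightarrow$(iii) exactly as you do (factoriality of $\Mcal_\psi=\Mcal$ plus Theorem~\ref{thm:ZM}), proves (i)$\Rightarrow$(iii) by contraposition --- if $\Dc\cap Z(\Bc)$ contains a nontrivial projection $p$, then $\rho=t\rho_0+(1-t)\rho_1$ with $\rho_i(x)$ proportional to $\rho(px)$, $\rho((1-p)x)$, so $\rho$ is not extreme in $R(E)$ and, by the characterization of extreme tracial quantum symmetric states in Theorem~8.2 of~\cite{DKW}, $\psi$ is not extreme in $\TQSS(A)$ --- and then deduces the face property from the resulting inclusion of extreme points, i.e.\ from (i)$\Rightarrow$(ii) together with the Choquet-theoretic fact that a compact convex subset of a simplex whose extreme points are extreme in the ambient simplex is a face. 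You instead attack the face property head-on: given $\psi=t\psi_1+(1-t)\psi_2$ in $TS(\Afr)$, you extend each $\psi_j$ to a normal tracial functional on $\Mcal_\psi=\Mcal$ dominated by a multiple of $\psih$, apply the tracial Radon--Nikodym theorem and faithfulness of $\psih$ to get a central density $h_j$, use Theorem~\ref{thm:ZM} to conclude $h_j\in\Dc\cap Z(\Bc)$, and then exhibit $\psi_j=\rho_j\circ F\circ(*_1^\infty\sigma)$ with $\rho_j=\rho(h_j\,\cdot)$, so $\psi_j\in\TQSS(A)$ by Proposition~3.1 of~\cite{DKW}; your checks that $\rho_j$ is a normal trace, that $\rho_j\circ E$ is a trace on $\Bc$, and that $F(h_jx)=h_jF(x)$ are exactly the points that need $h_j\in\Dc\cap Z(\Bc)$, and they go through. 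What your route buys is independence from Theorem~8.2 of~\cite{DKW} and from the (implicit, slightly nontrivial) abstract Choquet argument behind the paper's ``(i)$\Rightarrow$(ii) implies face'' step; it is also close in spirit to the paper's Corollary~\ref{cor:traces}, which the authors note is related to but not used in their proof. What the paper's route buys is brevity, since the central-projection splitting of $\rho$ plus the already-available extremality characterization settles (i)$\Rightarrow$(iii) in a few lines. Both arguments lean on the same two pillars: Theorem~\ref{thm:ZM} and the identification $\Mcal_\psi=\Mcal$, $\psih=\rho\circ F$ coming from the quintuple correspondence.
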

\begin{proof}
The implication (i)$\implies$(ii), when proved, will imply that $\TQSS(A)$ is
a face of $TS(\Afr)$ and, thus, a Choquet simplex.

The implication (ii)$\implies$(i) is clearly true.

Let $(\Mcal,F)$ be as in~\eqref{eq:MF}.
By
Theorem~\ref{thm:ZM}, condition~(iii) is equivalent to factoriality of $\Mcal$,
and this is equivalent to condition~(ii).
Thus, conditions~(ii) and~(iii) are equivalent.

To finish the proof, it will suffice to show (i)$\implies$(iii).
If~(iii) fails to hold, then $\Dc\cap Z(\Bc)$ has a projection $p$ equal to neither $0$ nor $1$.
Let $t=\rho(p)$.
Since $\rho$ is faithful, we have
$0<t<1$ and we can write $\rho=t\rho_0+(1-t)\rho_1$, where
\[
\rho_0(x)=t^{-1}\rho(px),\qquad\rho_1(x)=(1-t)^{-1}\rho((1-p)x).
\]
Since $p$ lies in $\Dc\cap Z(\Bc)$, we see that $\rho_0$ and $\rho_1$ are distinct normal tracial states on $\Dc$
and that $\rho_i\circ E$ is a trace on $\Bc$ ($i=0,1$).
Thus, $\rho$ is not an extreme point of $R(E)$, and $\psi$ is not extreme in $\TQSS(A)$.
\end{proof}

In Theorem~\ref{thm:Poulsen},
we will use multiplicative free Brownian motion (see~\cite{B97}) to show that every quantum symetric state is a limit
of extreme quantum symmetric states.
This will show that $\TQSS(A)$ is the Poulsen simplex, when $A$ is separable and not a copy of $\Cpx$.

Multiplicative free Brownian motion is the solution $(U_t)_{t\ge0}$ of the linear stochastic differential equation 
\[
U_t=1-\frac{1}{2}\int_0^tU_sds+\int_0^tidS_sU_s=e^{-t/2}+\int_0^tidS_se^{-(t-s)/2}U_s,
\]
where $(S_t)_{t\ge0}$ is an additive free Brownian motion.
Then each $U_t$ is unitary (see~\cite{B97a}) and belongs to the von Neumann algebra
$W^*(S_t,t>0)$, which is a copy of $L(\Fbb_\infty)$.
We will need the following lemma.

\begin{lemma}\label{lem:MBM}
Let $\Mcal$ be a von Neumann algebra with normal, faithful, tracial state $\tau$ and suppose $\Nc\subseteq\Mcal$ is a unital
von Neumann subalgebra and $(U_t)_{t\ge0}$ is a multiplicative free Brownian motion that is free from $\Nc$ with respect to $\tau$.
Then for every unital C$^*$-subalgebra $A\subseteq\Nc$ with $\dim(A)>1$ and for every $t>0$,
we have
\[
(U_t^*A U_t)'\cap\Nc=\Cpx1.
\]
\end{lemma}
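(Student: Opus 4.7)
\emph{Proof plan.} The plan is to exploit the Voiculescu free product Hilbert space decomposition arising from the freeness of $W^*(U_t)$ from $\Nc$ in $(\Mcal,\tau)$, and to extract a length-$4$ tensor component of the commutator that is forced to vanish.

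Suppose $x\in(U_t^*AU_t)'\cap\Nc$; subtracting the scalar $\tau(x)\cdot 1$, we reduce to showing that if $\tau(x)=0$ then $x=0$. Since $\dim A>1$, pick $a\in A$ with $\tau(a)=0$ and $a\neq 0$. The commutation $[x,U_t^*aU_t]=0$ rearranges to $aU_txU_t^*=U_txU_t^*a$ in $\Mcal$; applying both sides to the cyclic vector $\hat 1\in L^2(\Mcal,\tau)$ yields the vector identity $aU_txU_t^*\hat 1 = U_txU_t^*a\hat 1$.

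By freeness of $W^*(U_t)$ and $\Nc$, writing $K_\Nc:=L^2(\Nc,\tau)\ominus\Cpx\hat 1$ and $K_U:=L^2(W^*(U_t),\tau)\ominus\Cpx\hat 1$, the $L^2$-space of the subalgebra $W^*(\Nc\cup\{U_t\})$ decomposes as
\[
\Cpx\hat 1\oplus\bigoplus_{n\geq1}\;\bigoplus_{\substack{i_1,\ldots,i_n\in\{\Nc,U\}\\ i_j\neq i_{j+1}}}K_{i_1}\otimes\cdots\otimes K_{i_n},
\]
and any centered alternating product $\xi_1\xi_2\cdots\xi_n$ becomes the simple tensor $\hat\xi_1\otimes\cdots\otimes\hat\xi_n$ in the corresponding summand. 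Set $c:=\tau(U_t)=e^{-t/2}$ and $w:=U_t-c$, so $w\in W^*(U_t)$ is centered with $\|w\|_2^2=1-e^{-t}>0$, and expand
\[
U_txU_t^* = c^2x + c\,xw^* + c\,wx + wxw^*.
\]

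The key observation is that among the four terms obtained by left-multiplying this expansion by $a$, only $awxw^*$ has $4$ alternating factors, giving the pattern $(\Nc,U,\Nc,U)$ and contributing $\hat a\otimes\hat w\otimes\hat x\otimes\hat{w^*}$ to the summand $K_\Nc\otimes K_U\otimes K_\Nc\otimes K_U$. Dually, only $wxw^*a$ on the right-multiplication side has $4$ alternating factors, but with pattern $(U,\Nc,U,\Nc)$, contributing to the \emph{orthogonal} summand $K_U\otimes K_\Nc\otimes K_U\otimes K_\Nc$. After replacing the $\Nc$-products $ax$ and $xa$ by their centered-plus-scalar decompositions, every remaining term in either side contributes only to summands of length at most $3$. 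Comparing the $K_\Nc\otimes K_U\otimes K_\Nc\otimes K_U$-components of the vector identity forces $\hat a\otimes\hat w\otimes\hat x\otimes\hat{w^*}=0$, and since $\hat a,\hat w,\hat{w^*}$ are all nonzero, this gives $\hat x=0$, i.e., $x=0$.

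The main technical obstacle is the bookkeeping needed to confirm that none of the shorter products in the expansions of $aU_txU_t^*$ and $U_txU_t^*a$ can contribute to a length-$4$ summand; given the explicit four-term form of $U_txU_t^*$, this check is brief and routine.
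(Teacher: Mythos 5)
Your argument is correct, but it is a genuinely different route from the paper's. The paper bounds the liberation Fisher information, $\varphi^*(U_t^*AU_t:\Nc)\le F(U_t)<\infty$, and invokes Voiculescu's Lemma 12.5 to conclude that a projection of $U_t^*AU_t$ and a commuting projection of the relative commutant would have to be in general position, which is absurd; this uses the specific regularity of free unitary Brownian motion at time $t>0$ (finiteness of its Fisher information). You instead work directly in the free product Hilbert space of $(\Nc,\tau)$ and $(W^*(U_t),\tau)$: writing $U_t=c+w$ with $c=\tau(U_t)=e^{-t/2}$ and projecting the vector identity $aU_txU_t^*\hat 1=U_txU_t^*a\hat 1$ (for $\tau(a)=\tau(x)=0$) onto the summand $K_\Nc\otimes K_U\otimes K_\Nc\otimes K_U$, only the word $awxw^*$ contributes, while the unique length-four word $wxw^*a$ on the other side lands in the orthogonal pattern $K_U\otimes K_\Nc\otimes K_U\otimes K_\Nc$; I checked the remaining terms (after splitting $ax$ and $xa$ into scalar plus centered parts) and they indeed have length at most three, so $\hat a\otimes\hat w\otimes\hat x\otimes\hat{w^*}=0$, and since $\hat a\ne0$ and $\|\hat w\|_2^2=\|\hat w^*\|_2^2=1-e^{-t}>0$, faithfulness of $\tau$ gives $x=0$. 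What your approach buys is elementarity and generality: the only properties of $U_t$ you use are that it is a unitary free from $\Nc$ and not a scalar (i.e.\ $|\tau(U_t)|<1$), so your proof establishes the lemma for an arbitrary nonscalar unitary free from $\Nc$, with no liberation theory; the role of free Brownian motion is then confined to the norm estimate $\|U_t-1\|\to0$ used later in Theorem~\ref{thm:Poulsen}. What the paper's route buys is the stronger structural conclusion (general position of the projections), though only triviality of the relative commutant is actually needed. One point to make explicit when writing this up: the summands indexed by distinct alternating patterns are orthogonal by construction of the free product Hilbert space, and a simple tensor vanishes only if one of its legs does — these are the two facts that make the length-four comparison decisive.
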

\begin{proof}
If $(U_t^*A U_t)'\cap\Nc$ is nontrivial, then it contains a projection $p\notin\{0,1\}$.
Without loss of generality, we may assume $A$ is a von Neumann subalgebra of $\Nc$ and, thus,
contains a projection $q\notin\{0,1\}$.

From Proposition 9.4 and Remark 8.10 of~\cite{Vo99}, the liberation Fisher information satisfies
\[
\varphi^*(U_t^*A U_t:\Nc)\leq F(U_t)<\infty,
\]
for any $t>0$, where $F$ is the Fisher information for unitaries.
Thus, from Remark 9.2(e) of~\cite{Vo99}, we have
\[
\varphi^*(W^*(U_t^*qU_t) :W^*(p))\leq \varphi^*(U_t^*A U_t:\Nc)<\infty.
\]
As a consequence, the assumptions of Lemma 12.5 of~\cite{Vo99} are satisfied and, therefore,
$U_t^*qU_t$ and $p$ are in general position,
i.e.,
\begin{equation}\label{eq:genpos1}
U_t^*qU_t\wedge p=0\quad\text{or}\quad U_t^*(1-q)U_t\wedge (1-p)=0,
\end{equation}
and
\begin{equation}\label{eq:genpos2}
U_t^*(1-q)U_t\wedge p=0\quad\text{or}\quad U_t^*qU_t\wedge (1-p)=0.
\end{equation}
But this is not compatible with the assumption that $U_t^*qU_t$ and $p$ commute.
For example, if
\[
U_t^*qU_t\wedge p=U_t^*(1-q)U_t\wedge p=0,
\]
then
\[
0=U_t^*qU_tp+U_t^*(1-q)U_tp=p,
\]
contrary to hypothesis,
and similarly if other cases of~\eqref{eq:genpos1} and~\eqref{eq:genpos2} hold.
\end{proof}

\begin{thm}\label{thm:Poulsen}
For every unital C$^*$-algebra $A$ with $\dim(A)>1$,  the extreme points of $\TQSS(A)$ are dense in $\TQSS(A)$.
Hence, if $A$ is also separable, then $\TQSS(A)$
is the Poulsen simplex.
\end{thm}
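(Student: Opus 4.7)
The plan is to approximate each $\psi\in\TQSS(A)$ by extreme tracial quantum symmetric states obtained by twisting $\sigma$ via conjugation by a multiplicative free Brownian motion. Fix $\psi\in\TQSS(A)$ with minimal quintuple $(\Bc,\Dc,E,\sigma,\rho)$ and form the von Neumann algebra free product $\Bct=\Bc*L(\Fbb_\infty)$, taken with respect to the trace $\rho\circ E$ on $\Bc$ and the canonical trace on $L(\Fbb_\infty)$. Inside $L(\Fbb_\infty)\subseteq\Bct$ sits a multiplicative free Brownian motion $(U_t)_{t\ge0}$ that is freely independent from $\Bc$. Let $F_\Bc:\Bct\to\Bc$ be the trace-preserving conditional expectation and set $\Et=E\circ F_\Bc:\Bct\to\Dc$, so that $\rho\circ\Et$ is a faithful trace on $\Bct$. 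For each $t>0$ define $\sigma_t:A\to\Bct$ by $\sigma_t(a)=U_t^*\sigma(a)U_t$.

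Next I would form the amalgamated free product of von Neumann algebras
\[
(\Mcalt,\Ft)=(*_\Dc)_1^\infty(\Bct,\Et),
\]
with $i$-th copy $\Bct_i\supseteq\Bc_i$ carrying its own free Brownian motion $(U_{t,i})_{t\ge0}$ and its own embedding $\sigma_i:A\to\Bc_i$. By Proposition~\ref{prop:amalgtr}, $\rho\circ\Ft$ is a faithful tracial state on $\Mcalt$, so
\[
\psi_t:=\rho\circ\Ft\circ(*_1^\infty\sigma_t),
\]
where $*_1^\infty\sigma_t:\Afr\to\Mcalt$ sends the $i$-th copy of $A$ to $\sigma_{t,i}(A):=U_{t,i}^*\sigma_i(A)U_{t,i}\subseteq\Bct_i$, is tracial. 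By Proposition~\ref{prop:tail} applied with $\sigma_t$ in place of $\sigma$, $\psi_t\in\QSS(A)$, hence $\psi_t\in\TQSS(A)$.

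To prove extremality of $\psi_t$, let $(\Bc_t,\Dc_t,E_t,\sigma_t,\rho\restrict_{\Dc_t})$ denote its minimal quintuple. Remark~\ref{rmk:tailgen} identifies the tail algebra $\Dc_t$ as a subalgebra of the range of $\Ft$, so $\Dc_t\subseteq\Dc\subseteq\Bc_1\subseteq\Bct_1$. By Theorem~\ref{thm:choq} it suffices to show $\Dc_t\cap Z(\Bc_t)=\Cpx1$. Any such central element $z$ lies in $\Bc_1$ and commutes with $\sigma_{t,1}(A)=U_{t,1}^*\sigma_1(A)U_{t,1}$, so $z\in\sigma_{t,1}(A)'\cap\Bc_1$. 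Since $(U_{t,1})_{t\ge0}$ is a multiplicative free Brownian motion freely independent from $\Bc_1$ inside $(\Mcalt,\rho\circ\Ft)$ and $\dim\sigma_1(A)=\dim A>1$, Lemma~\ref{lem:MBM} (with $\Nc=\Bc_1$) gives $\sigma_{t,1}(A)'\cap\Bc_1=\Cpx1$, forcing $z\in\Cpx1$.

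Finally I would check $\psi_t\to\psi$ weak-$*$ as $t\to0^+$. Each $U_t$ is unitary with $\tau(U_t)=e^{-t/2}$, so $\|U_{t,i}-1\|_2\to0$ in $L^2(\Mcalt,\rho\circ\Ft)$; joint $L^2$-continuity of multiplication on norm-bounded sets then gives $\sigma_{t,i_1}(a_1)\cdots\sigma_{t,i_n}(a_n)\to\sigma_{i_1}(a_1)\cdots\sigma_{i_n}(a_n)$ in $L^2(\Mcalt)$, with limit in the canonical copy of $\Mcal=(*_\Dc)_1^\infty(\Bc,E)\subseteq\Mcalt$. Applying the $L^2$-contraction $\Ft$ (whose restriction to $\Mcal$ equals $F$ by uniqueness of the trace-preserving conditional expectation) and then $\rho$ yields $\psi_t(w)\to\psi(w)$ for every word $w\in\Afr$, establishing the required density. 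The main obstacle in this plan is arranging the construction so that the tail algebra $\Dc_t$ of $\psi_t$ automatically sits inside the original $\Bc_1$; once that is achieved, the commutant calculation required for extremality reduces directly to Lemma~\ref{lem:MBM}.
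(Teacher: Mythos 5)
Your proposal is correct and takes essentially the same route as the paper: conjugate $\sigma$ by a free multiplicative Brownian motion inside $(*_\Dc)_1^\infty(\Bct,\Et)$ with $\Bct=\Bc*W^*(S_t,\,t>0)$, locate the tail algebra of $\psi_t$ inside $\Dc$ via Proposition~\ref{prop:tail}, and deduce extremality from Theorem~\ref{thm:choq} combined with Lemma~\ref{lem:MBM}. The only (harmless) deviation is the convergence step, where the paper bounds $\|U_t-1\|$ in operator norm via the free Burkholder--Gundy inequality, whereas you use $\|U_t-1\|_2^2=2(1-e^{-t/2})\to0$ together with $L^2$-continuity of multiplication on norm-bounded sets; both give $\psi_t(x)\to\psi(x)$ for all $x\in\Afr$ and hence the required density.
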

\begin{proof}
If $A$ is separable, then the free product algebra $\Afr$ is also separable and, thus, $\TQSS(A)$ is second countable.
By Urysohn's metrization theorem, it is metrizable.
Once the density of extreme points is shown, it will follow that $\TQSS(A)$ is the Poulsen simplex (see~\cite{LOS78}).

We now show density of extreme points.
Let $\psi\in\TQSS(A)$ and let $(\Bc,\Dc,E,\rho,\sigma)$ be its associated quintuple.
We use the notation from the description at the beginning of this section.
In particular, $\psi=\rho\circ F\circ(*_1^\infty\sigma)$, and we let $\psih=\rho\circ F$ denote the normal
extension of $\psi$ to $\Mcal$.
Let
\[
(\Mcalt,\tau)=(\Mcal,\psih)*(L(\Fbb_\infty),\tau_{\Fbb_\infty})
\]
be the free product of $\Mcal$ with a copy of $L(\Fbb_\infty)$.
Then, since $(L(\Fbb_\infty),\tau_{\Fbb_\infty})\cong*_1^\infty (L(\Fbb_\infty),\tau_{\Fbb_\infty})\cong*_1^\infty (W^*(S_t,t>0),\tau)$, for the von Neumann algebra of a free Brownian motion algebra $W^*(S_t,t>0)\cong L(\Fbb_\infty)$,
 letting
\[
(\Bct,\eta)=(\Bc,\rho\circ E)*(W^*(S_t,t>0),\tau)
\]
and letting $\Et:\Bct\to\Dc$ be the composition of the
$\eta$-preserving conditional expectation $\Bct\to\Bc$
arising from the free product construction with the conditional expectation
$E:\Bc\to\Dc$,
we have that $\Mcalt$ is isomorphic to the von Neumann algebra free product with amalgamation,
\begin{equation}\label{eq:MtFt}
(\Mcalt,\Ft)\cong(*_\Dc)_1^\infty(\Bct,\Et)
\end{equation}
and the trace $\tau$ arises as $\rho\circ\Ft$.

Letting $(U_t)_{t\ge0}$ be a multiplicative free Brownian motion in $W^*(S_t,t>0)$,
from the free $L^\infty$ version of the Burkholder-Gundy inequalities (Theorem 3.2.1 of~\cite{BS}),
we have the upper bound 
\begin{multline}\label{Boundat0}
||U_t-1||\leq (1-e^{-t/2})+2\sqrt{2}\left(\int_0^t||U_s||^2e^{-(t-s)}ds\right)^{1/2} \\
=(1-e^{-t/2})+2\sqrt{2(1-e^{-t})},
\end{multline}
which tends to zero as $t\to0^+$.

Let $\sigma_t:A\to\Bct$ be the $*$-homomorphism $U_t\sigma(\cdot)U_t^*$.
Then $*_1^\infty\sigma_t$ is a $*$-homomorphism from $\Afr$ into $\Mcalt$.
By freeness with amalgamation (see Proposition~3.1 of~\cite{DKW}), 
the state $\psi_t:=\rho\circ\Ft\circ(*_1^\infty\sigma_t)=\tau\circ(*_1^\infty\sigma_t)$ is a quantum symmetric state.

We will show that for every $t>0$, $\psi_t$ is an extreme point of $\TQSS(A)$.
By Proposition~\ref{prop:tail}, the tail algebra $\Tc_{\psi_t}$ of $\psi_t$ is a von Neumann subalgebra of $\Dc$,
and the conditional expectation $E_{\psi_t}$ onto the tail algebra is the restriction of $\Ft$.
In particular, see Remark~\ref{rmk:tailgen} for description of generators for $\Dc$.
Let $(\Bc_t,\Dc_t,E_t,\rho_t,\sigma_t)$ denote the quintuple corresponding to the quantum symmetric state $\psi_t$.
Then $\Dc_t=\Tc_{\psi_t}\subseteq\Dc$ and $\Bc_t\supseteq\sigma_t(A)$.
By Theorem~\ref{thm:choq}, showing that $\psi_t$ is an extreme point of $\TQSS(A)$
is equivalent to showing that $\Dc_t\cap Z(\Bc_t)$
is trivial.
But $\Dc_t\cap Z(\Bc_t)$ is contained in $\Dc_t\cap (U_t^*\sigma(A) U_t)'$.
By Lemma~\ref{lem:MBM}, the latter set is trivial,
and we have proved that $\psi_t$ is an extreme tracial quantum symmetric state.

From the bound \eqref{Boundat0}, we deduce that  for every $x\in\Afr$,
$\lim_{t\to0^+}\|\psi_t(x)-\psi(x)\|=0$, working first
with the case of $x$ in the algebraic free product,
and passing to the general case by norm approximation.
\end{proof}

\begin{remark}
In contrast, the simplices $\ZQSS(A)$ and $\ZTQSS(A)$ of central quantum symmetric
states and central tracial quantum symmetric states, respectively, (see~\cite{DKW})
are Bauer simplices, meaning that their respective sets of extreme points are closed.
This follows from the proof of Theorem~9.2 of~\cite{DKW} and in particular the fact that the map $\phi\mapsto*_1^\infty\phi$
in equation~(35)
of~\cite{DKW} is a homeomorphism from $S(A)$ onto the extreme boundary of $\ZQSS(A)$ and, by restricting to the tracial state
space, yields a homeomorphism from $TS(A)$ onto the extreme boundary of $\ZTQSS(A)$.
\end{remark}

\begin{bibdiv}
\begin{biblist}

\bib{B97a}{article}{
   author={Biane, Philippe},
   title={Free Brownian motion, free stochastic calculus and random matrices},
   conference={
      title={Free probability theory},
      address={Waterloo, ON},
      date={1995},
   },
   book={
      series={Fields Inst. Commun.},
      volume={12},
      publisher={Amer. Math. Soc.},
      place={Providence, RI},
   },
   date={1997},
   pages={1--19},
}

\bib{B97}{article}{
   author={Biane, Philippe},
   title={Segal-Bargmann transform, functional calculus on matrix spaces and the theory of semi-circular and circular systems},
   journal={J. Funct. Anal.},
   volume={144},
   date={1997},
   pages={232--286},
}
		
\bib{BS}{article}{
   author={Biane, Philippe},
   author={Speicher, Roland},
   title={Stochastic calculus with respect to free Brownian motion and analysis on Wigner space},
   journal={Probab. Theory Related Fields},
   volume={112},
   date={1998},
   pages={373--409},
}

\bib{DKW}{article}{
  author={Dykema, Ken},
  author={K\"ostler, Claus},
  author={Williams, John},
  title={Quantum symmetric states on free product C$^*$-algebras},
  eprint={http://arxiv.org/abs/1305.7293},
}

\bib{KR83}{book}{
  author={Kadison, Richard V.},
  author={Ringrose, John R.},
  title={Fundamentals of the Theory of Operator Algebras, vols. I and II},
  publisher={Academic Press},
  year={1983}
}

\bib{KS09}{article}{
  author={K\"ostler, Claus},
  author={Speicher, Roland},
  title={A noncommutative de Finetti theorem:
         invariance under quantum permutations is equivalent to freeness with amalgamation},
  journal={Comm. Math. Phys.},
  volume={291},
  year={2009},
  pages={473--490}
}

\bib{LOS78}{article}{
   author={Lindenstrauss, J.},
   author={Olsen, G.},
   author={Sternfeld, Y.},
   title={The Poulsen simplex},
   journal={Ann. Inst. Fourier (Grenoble)},
   volume={28},
   date={1978},
   pages={91--114},
}

\bib{Sa71}{book}{
  author={Sakai, Shoichiro},
  title={C$^*$-algebras and W$^*$-algebras},
  publisher={Springer--Verlag},
  year={1971}
}

\bib{U99}{article}{
   author={Ueda, Yoshimichi},
   title={Amalgamated free product over Cartan subalgebra},
   journal={Pacific J. Math.},
   volume={191},
   date={1999},
   pages={359--392},
}

\bib{Vo99}{article}{
   author={Voiculescu, Dan},
   title={The analogues of entropy and of Fisher's information measure in free probability theory. VI. Liberation and mutual free information},
   journal={Adv. Math.},
   volume={146},
   date={1999},
   pages={101--166},
}

\bib{W98}{article}{
  author={Wang, S.},
  title={Quantum symmetry groups of finite spaces},
  journal={Comm. Math. Phys.},
  volume={195},
  pages={195--211},
  year={1998}
}

\end{biblist}
\end{bibdiv}

\end{document}